\newtheorem{thm}{Theorem}[section]
\newtheorem{cor}[thm]{Corollary}
\newtheorem{lem}[thm]{Lemma}
\theoremstyle{definition}
\newtheorem{defn}[thm]{Definition}
\newtheorem{rem}[thm]{Remark}
\newtheorem{que}[thm]{Question}
\newtheorem{para}[thm]{--}
\newcommand{\IC} {\mathbb{C}}
\newcommand{\IQ} {\mathbb{Q}}
\newcommand{\IR} {\mathbb{R}}
\newcommand{\IZ} {\mathbb{Z}}
\newcommand{\cO} {\mathcal{O}}
\DeclareMathOperator{\GL}{GL}
\newcommand{\R} {\ensuremath{\mathbb{R}}}
\newcommand{\Q} {\ensuremath{\mathbb{Q}}}
\newcommand{\Z} {\ensuremath{\mathbb{Z}}}
\newcommand{\rank} {\ensuremath{\textnormal{rank}}}
\newcommand{\OO} {\ensuremath{\mathcal{O}}}
\newcommand{\norm}[1]{\Vert#1\Vert}
\newcommand{\abs}[1]{\vert#1\vert}
\newcommand{\angl}[1]{\left\langle #1\right\rangle}
\newcommand{\tq}{\: | \:}
\newcommand{\vide}{\varnothing}
\def\ps@pprintTitle{%
	\let\@oddhead\@empty
	\let\@evenhead\@empty
	\let\@oddfoot\@empty
	\let\@evenfoot\@oddfoot
}
\begin{document}

\begin{frontmatter}

\title{Inhomogeneous minima of mixed signature lattices}

\author{Eva Bayer-Fluckiger\fnref{label1}}
\ead{eva.bayer@epfl.ch}

\author{Martino Borello\fnref{label1}}
\ead{martino.borello@univ-paris8.fr}

\author{Peter Jossen\fnref{label2}}
\ead{peter.jossen@kcl.ac.uk}

\fntext[label1]{{\sc \'{E}cole Polytechnique F\'{e}d\'{e}rale de
Lausanne}, SB MathGeom CSAG, B\^{a}timent MA, Station 8, CH-1015
Lausanne, Switzerland}

\fntext[label2]{{\sc Eidgen\"{o}ssische Technische Hochschule
Z\"{u}rich}, Gruppe 4, Departement Mathematik, HG G 66.2,
R\"{a}mistrasse 101, CH-8092 Z\"{u}rich, Switzerland}

\begin{abstract}
We establish an explicit upper bound for the Euclidean minimum of a
number field which depends, in a precise manner, only on its
discriminant and the number of real and complex embeddings. Such
bounds were shown to exist by Davenport and Swinnerton-Dyer
(\cite{MR0047707,MR0056644,MR0075988}). In the case of totally real
fields, an optimal bound was conjectured by Minkowski and it is
proved for fields of small degree. In this note we develop methods
of McMullen (\cite{MR2138142}) in the case of mixed signature in
order to get explicit bounds for the Euclidean minimum.
\end{abstract}

\begin{keyword}
Lattices \sep Euclidean minimum

\MSC 80.010 \sep 80.030
\end{keyword}

\end{frontmatter}

\section{Introduction}

\begin{par}
Let $K$ be a number field of degree $n$, let $\OO_K$ be its ring of
integers and $d_K$ be the absolute value of its discriminant. Let
${N}: K \to \IQ$ be the absolute value of the norm map. The number
field $K$ is said to be {\it Euclidean with respect to the norm} if
for every $a,b \in \OO_K$ with $b \not = 0$ there exist $c, d \in
\OO_K$ such that $a = bc + d$ and ${ N}(d) < { N}(b)$. Equivalently,
the number field $K$ is Euclidean with respect to the norm if for
every $x \in K$ there exists $c \in \OO_K$ such that ${ N}(x-c) <
1$. This suggests to look at the real number
\begin{equation}\label{Eqn:DefOfEuclideanMinForNumberField}
M(K) := {\rm sup}_{x \in K} {\rm inf}_{c \in \OO_K} { N}(x-c),
\end{equation}
called the \emph{Euclidean minimum} of $K$. If $K$ is totally real,
then Minkowski's conjecture states that the inequality
\begin{equation}\label{Eqn:MinkowskiConj}
M(K) \le 2^{-n}\cdot \sqrt {d_K}
\end{equation}
holds. This conjecture is known for $n \le 9$ (see \S
\ref{Par:MinkowskiConjStatus} for details). It is natural to look
for bounds similar to (\ref{Eqn:MinkowskiConj}) for number fields of
mixed signature. Such bounds were obtained by Clarke
\cite{MR0047706} and Davenport \cite{MR0047707}. The latter proved
that for every pair of nonnegative integers $(r,s)$ there exists a
constant $C_{r,s}$ such that
\begin{equation}\label{Eqn:DavenportInequality}
M(K) \leq 2^{\frac{-sn}{r+s}}\cdot C_{r,s}\cdot {d_K}^{\frac{n}{2(r+
s)}}
\end{equation}
holds for every number field $K$ of signature $(r,s)$. Although an
explicit constant $C_{r,s}$  can be deduced from Davenport's proof,
it is too large to be useful. The aim of this paper is to develop
methods of McMullen (cf. \cite{MR2138142}) to obtain a better
constant. A weakened but easy to read form of our main result
(Theorem \ref{thm:main}) is the following:
\end{par}

\vspace{4mm}
\begin{par}{\bf Theorem.}\emph{
Let $K$ be a number field of signature $(r,s)$ and degree $n =
r+2s\geq 4$, and let $d_K$ be the absolute value of the discriminant
of $K$. The following inequality holds.
$$M(K)\leq 2^{\frac{-sn}{r+s}}\cdot \big ( \tfrac 12 \sqrt n\big )^n  \cdot {d_K}^{\frac{n}{2(r+s)}}.$$}
\end{par}

\begin{par}
In other words, the inequality \eqref{Eqn:DavenportInequality} holds
with the constant $C_{r,s} = 2^{-n} \cdot n^{\frac n 2}$, which is
still very large. In \S \ref{sec1} we compare our result to known
estimates for totally real and totally imaginary fields, and
formulate some questions as to which bounds one might hope for.
\end{par}

\vspace{4mm}
\begin{par}{\bf Convention:}
Throughout the paper, all vector spaces are understood to be finite
dimensional real vector spaces. A \emph{scalar product} on a vector
space $V$ is a symmetric positive definite bilinear form $V\times V
\to \R$, and by a \emph{lattice} in $V$ we understand a cocompact
discrete subgroup of $V$.
\end{par}

\vspace{14mm}
\section{Inhomogeneous minima of lattices of mixed signature}\label{sec1}

\begin{par}
We explain in this section what we mean by lattices of mixed
signature, recall the definition and some properties of
inhomogeneous minima, and give some motivation for studying these
objects.
\end{par}

\vspace{4mm}
\begin{para}\label{Para:IntroLatticeOfSignature}
\begin{par}
Let $r$ and $s$ be nonnegative integers and set $n=r+2s$. Let
$V:=\R^r\oplus \IC^s$, which is an $n$--dimensional vector space over
$\R$. We equip $V$ with the scalar product $\angl{-,-}$ given by
\begin{equation}\label{Eqn:ScalarProdInV}
\angl{(v_1,\ldots,v_{r+s}),(v_1',\ldots,v_{r+s}')} = \sum_{i=1}^r
v_iv'_i + \sum_{i=r+1}^{r+s} {\rm Re}(v_i\overline v_i')
\end{equation}
and we call the function $N:V\rightarrow \R$ given by
\begin{equation}\label{eq}
N(v_1,\ldots,v_{r+s})=  | v_1 \cdot \ldots \cdot v_r\cdot
v_{r+1}^2\cdot \ldots \cdot v_{r+s}^2|
\end{equation}
the \emph{norm}, although it is not a norm in the usual sense.
\end{par}
\begin{par}
By a \emph{lattice of signature $(r,s)$} we mean a lattice in this
particular vector space $V$. We denote by $\det(\Lambda)$ the volume
of $V/\Lambda$ with respect to the volume form obtained from the
scalar product (\ref{Eqn:ScalarProdInV}). We call the real numbers
$$m(\Lambda):=\inf_{\lambda\in \Lambda\setminus \{0\}}N(\lambda)  \qquad  \qquad \text{and}  \qquad  \qquad  \ M(\Lambda):=\sup_{v\in V}\inf_{\lambda\in\Lambda}N(v-\lambda)$$
the \textit{homogeneous minimum}, respectively the
\textit{inhomogeneous minimum}, of $\Lambda$.
\end{par}
\end{para}

\vspace{4mm}
\begin{para}
Our motivation for studying inhomogeneous minima of lattices of
mixed signature is the classical geometry of numbers. Let $K$ be a
number field of degree $n$ and signature $(r,s)$, so that $n=r+2s$.
Let $d_K$ be the absolute value of the discriminant of $K$. Choosing
an ordering of the $r$ real and of $s$ non-conjugated complex
embeddings of $K$ we obtain a $\IQ$--linear embedding $K\to V$ with
dense image. The image of the ring of integers $\OO_K$ of $K$ in $V$
is a lattice of volume $2^{-s}\cdot \sqrt{d_K}$ and the norm map
$N:V\rightarrow \R$ given in \eqref{eq} continuously extends the
absolute value of the usual norm map ${ N}:K\rightarrow \Q$, hence
the name. In this context, $m(K)$ and $M(K)$ denote the homogeneous
and the inhomogeneous minimum of $\OO_K$ as a lattice in $V$. These
quantities are independent of the ordering of the real and complex
embeddings of $K$. Cerri proves in \cite{MR2222729} that $M(K)$ is
equal to the Euclidean minimum of the number field $K$ as given in
\eqref{Eqn:DefOfEuclideanMinForNumberField}.
\end{para}

\vspace{4mm}
\begin{para}\label{Par:MinkowskiConjStatus}
\emph{Minkowski's conjecture} on inhomogeneous minima of products of
real linear forms (see for instance \cite{MR893813}) states that if
$s=0$, the inequality
$$M(\Lambda)\leq 2^{-n}\cdot \det(\Lambda)$$
holds for every lattice $\Lambda$ in $V$. The conjecture is proved
for $n$ up to $9$  (\cite{MR1511108,MR1544627,MR0025515,
MR0306128,MR2138142, MR2516969,MR2776094, LR14}). In terms of number
fields, Minkowski's conjecture implies that
$$M(K)\leq 2^{-n} \cdot \sqrt{d_K}$$
holds for every totally real number field $K$. This is proved also
for particular totally real fields of degree $n> 9$, see for example
\cite{MR2211300, MR2274907, BSUA, MR3118625}. For totally real
number fields of any degree $n$, the inequality
\begin{equation}\label{Eqn:Cebotarev}
M(K)\leq (\sqrt 2)^{-n} \cdot \sqrt{d_K}
\end{equation}
holds by a theorem of Chebotarev, see for instance \cite{Dav46}.
There are improvements of this estimate, yet, to our best knowledge,
it is at present not known whether $M(K)\leq c^{-n} \cdot
\sqrt{d_K}$ holds for some real number $c>\sqrt 2$.
\end{para}

\vspace{4mm}
\begin{para}
Improving earlier results of  Clarke (\cite{MR0047706}), Davenport
shows in \cite{MR0047707} that for every pair $(r,s)$ there exists a
real number $C_{r,s}\geq 0$ such that the inequality (with the
convention $0^0=1$)
\begin{equation}\label{Eqn:DavenportIneqForLattice}
m(\Lambda)^\frac{s}{r+s}\cdot M(\Lambda) \leq C_{r,s}\cdot
\det(\Lambda)^\frac n{r+s}
\end{equation}
holds for all lattices of signature $(r,s)$. Notice that if we scale
$\Lambda$ by a real number $t > 0$, both sides of the inequality
change by the factor $t^{n^2/(r+s)}$. Also notice that the
inequality is trivial if $m(\Lambda)=0$, unless $s=0$. In terms of
number fields, where we have $m(\Lambda)=1$, Davenport's result
states that
\begin{equation}\label{Eqn:DavenportIneqForField}
M(K)\leq 2^{-\frac{sn}{r+s}} \cdot C_{r,s}\cdot
{d_K}^{\frac{n}{2(r+s)}}.
\end{equation}
holds for every number field $K$ of signature $(r,s)$. In the case
$s=0$ one hopes that the inequality
\eqref{Eqn:DavenportIneqForLattice} holds with the constant $C_{r,0}
= 2^{-r}$, according to Minkowski's conjecture, and one knows
\eqref{Eqn:DavenportIneqForLattice} to hold for $C_{r,0} = (\sqrt
2)^{-r}$, according to Chebotarev's theorem. In \cite{MR2274907} it
is proved that
\begin{equation}\label{Eqn:Bayer}
M(K)\leq 2^{-n}\cdot d_K
\end{equation}
holds for any number field $K$ of degree $n$. In other words, for
$r=0$ the inequality \eqref{Eqn:DavenportIneqForField} holds with
the constant $C_{0,s} = 1$.
\end{para}

\vspace{4mm}
\begin{para}
We wonder what an analogue of Minkowski's conjecture for number
fields or lattices of mixed signature should look like. In an
updated version of \cite{MR1362867} of 2004, Lemmermeyer states that
``similar results [to Minkowksi's conjecture] (not even a
conjecture) for fields with mixed signature are not known except for
a theorem of Swinnerton-Dyer (\cite{MR0065592}) concerning cubic
fields''. The result in question states that the inequality
\begin{equation}\label{cubic}
M(K)\leq \frac{1}{16\sqrt[3]{2}}\cdot {d_K}^{\frac{2}{3}}
\end{equation}
holds when $K$ is a complex cubic field. Also, the Euclidean minimum
of any complex quadratic field $K$ is known, and we have that
$M(K)\leq \frac{1}{8}\cdot d_K$.
\end{para}

\vspace{4mm}
\begin{para}\label{Par:MixedMinkowski1}
If we agree that a mixed signature analogue of Minkowski's
conjecture is an estimate of the form
\eqref{Eqn:DavenportIneqForLattice}, then the quantity we search to
determine is the real number
$$c_{r,s} := \sup\big\{m(\Lambda)^\frac s{r+s} \cdot M(\Lambda)\cdot \det(\Lambda)^{\frac{-n}{r+s}}\:\big|\: \Lambda \mbox{ is a lattice of signature }(r,s)\big \} $$
which exists by Davenport's result. Minkowski's conjecture states
$c_{r,0} = 2^{-r}$, but moreover it is part of the conjecture that
the supremum is attained precisely for those lattices $\Lambda
\subseteq \IR^r$ which are sums of $r$ lattices $\Lambda_i = a_i\IZ
\subseteq \IR$. For instance, the standard lattice $\IZ^r\subseteq
\IR^r$ satisfies $M(\IZ^r)= 2^{-r}$. In the case of mixed signature
$(r,s)$, lattices which are sums of $r$ lattices $a_i\IZ \subseteq
\IR$ and $s$ lattices $x_i\IZ + y_i\IZ \subseteq \IC$ are not
helpful in guessing $c_{r,s}$, because their homogeneous minima are
zero. Indeed, the equality
$$c_{r,s} = \sup\big\{M(\Lambda)\cdot \det(\Lambda)^{\frac{-n}{r+s}}\:\big|\: \Lambda \mbox{ is a lattice of signature } (r,s) \mbox{ and } m(\Lambda)=1\big \}$$
holds for $s>0$. It is not clear whether this is true as well when
$s=0$, except in the case $(r,s)=(2,0)$ because there are real
quadratic number fields whose euclidean minimum is arbitrarily close
to Minkowski's bound. Also, inhomogeneous minima of complex
quadratic fields are known, and one can show that $c_{0,1}=\frac12$.
\end{para}

\vspace{4mm}
\begin{que}\label{Que:MixedMinkowski1}
Are there positive real numbers $a$, $b$ such that $c_{r,s}\leq
a^rb^s$ holds for all $(r,s)$, that is, such that the inequality
$$M(\Lambda)  \leq a^rb^s\cdot\det(\Lambda)^{\frac{n}{r+s}}$$
holds for every lattice $\Lambda$ of signature $(r,s)$ and
inhomogeneous minimum $m(\Lambda)=1$? Can one choose $a=(\sqrt
2)^{-1}$ and $b=1$ as suggested by Chebotarev's and estimates
\eqref{Eqn:Cebotarev} and \eqref{Eqn:Bayer}? Could one even choose
$a=b=\frac12$?
\end{que}

\vspace{4mm}
\begin{para}\label{Par:MixedMinkowski2}
The inequality \eqref{cubic} does not fit in the discussion of
\S\ref{Par:MixedMinkowski1}, because the exponent of the
discriminant is not what we were asking for. This is not an accident
that just happens in the case of cubic fields, indeed, Davenport and
Swinnerton-Dyer have shown (cf. Theorem 1 in \cite{MR0075988},
together with \cite{MR0056644,MR0065592}) that there exists a real
number $B_{r,s}>0$ such that
$$ M(\Lambda)\leq B_{r,s}\cdot \det(\Lambda)^{\max(\frac{n-1}{r+s},\frac{n-s}{(r+s)-s/2})} $$
holds for every lattice $\Lambda\subseteq \IR^r\oplus\IC^s$
satisfying $m(\Lambda) = 1$.
\end{para}

\vspace{4mm}
\begin{que}\label{Que:MixedMinkowski2}
Let $A_{r,s} \subseteq \IR$ be the set of those real numbers $\alpha
\in \IR$ with the property that there exists $C \in \IR$ such that
$M(\Lambda)\leq C\det(\Lambda)^\alpha$ holds for every lattice
$\Lambda$ of signature $(r,s)$ and $m(\Lambda)=1$. This set is a
either a closed or an open half line
$$A_{r,s} = (\alpha_{r,s},\infty) \qquad \mbox{or}\qquad A_{r,s} = [\alpha_{r,s},\infty)$$
and we can define constants $c_{r,s}(\alpha)$ analogous to those in
\ref{Par:MixedMinkowski1} and ask questions like those in
\ref{Que:MixedMinkowski1} for any exponent $\alpha\in A_{r,s}$. As
proven by Davenport and Swinnerton-Dyer in
\cite{MR0075988,MR0056644,MR0065592} we have
$$\alpha_{r,s} \leq \max\Big(\frac{n-1}{r+s},\frac{n-s}{(r+s)-s/2}\Big)$$
and this inequality is an equality if $r=0$ (so $\alpha_{0,s}=2$)
and if $r=s=1$ (so $\alpha_{1,1}= \frac 43$). Davenport and
Swinnerton-Dyer suggest also that we might have equality whenever $s
= 1$, and also that the inequality is strict in other cases. What is
the number $\alpha_{r,s}$? Do we have $\alpha_{r,s}\in A_{r,s}$?
\end{que}

\vspace{14mm}
\section{Results on successive minima}\label{sec2}

\begin{par}
We fix for this section a pair of nonnegative integers $(r,s)$, and
denote by $V=\R^r \oplus \IC^{s}$ the vector space of dimension $n =
r+2s$ introduced in \ref{Para:IntroLatticeOfSignature}, equipped
with its scalar product \eqref{Eqn:ScalarProdInV} and norm map
\eqref{eq}. We set $\norm v := \sqrt{\langle v,v\rangle}$. We will
recall the definition and some results concerning the successive
minima of a lattice.
\end{par}

\vspace{4mm}
\begin{defn}
The \textit{successive minima} of a lattice $\Lambda$ in $V$ are the
real numbers $\mu_1(\Lambda),\ldots,\mu_n(\Lambda)$ defined in the
following way: for $m\in\{1,\ldots,n\}$, $\mu_m(\Lambda)$ is the
infimum of the real numbers $r$ such that there exist $m$
independent vectors $\lambda_1,\ldots,\lambda_m$ in
$\Lambda\setminus\{0\}$ with $\norm {\lambda_i} \leq r$ for $i\in
\{1,\ldots,m\}$. Vectors $\lambda\in \Lambda$ with
$\|\lambda\|=\mu_1(\Lambda)$ are called \textit{shortest}
(\textit{nonzero}) \textit{vectors}.
\end{defn}

\vspace{4mm}
\begin{defn}
The \textit{Hermite constant} for dimension $n$ is
$\gamma_n:=\sup_\Lambda \mu_1(\Lambda)^2\cdot \det(\Lambda)^{-2/n}$,
where $\Lambda$ runs over all lattices in $V$.
\end{defn}

\vspace{4mm}
\begin{lem}[Minkowski]\label{Mink}
For every lattice $\Lambda$ in $V$ and $1\leq t\leq n$ we have
$$\mu_1(\Lambda)\cdot \cdots\cdot \mu_t(\Lambda)\leq
\gamma_n^{t/2}\cdot \det(\Lambda)^{t/n},$$ where $\gamma_n$ is the
Hermite constant.
\end{lem}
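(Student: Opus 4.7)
The lemma is classical and I would prove it by induction on the dimension $n$, with the base case $n = 1$ trivial (there $\gamma_1 = 1$ and $\mu_1(\Lambda) = \det(\Lambda)$ for every rank-$1$ lattice). For general $n$ and $t = 1$, the statement $\mu_1(\Lambda) \leq \gamma_n^{1/2}\det(\Lambda)^{1/n}$ is nothing but the definition of the Hermite constant $\gamma_n$, so the substance of the proof lies in the case $t \geq 2$.

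For $t \geq 2$ I would pick $\lambda_1 \in \Lambda$ with $\norm{\lambda_1} = \mu_1(\Lambda)$. Any such $\lambda_1$ is primitive, since any non-trivial divisor would be strictly shorter, so the orthogonal projection $\pi : V \to V/\IR\lambda_1$ sends $\Lambda$ onto a lattice $\bar\Lambda$ of rank $n-1$ satisfying $\det(\bar\Lambda) = \det(\Lambda)/\mu_1(\Lambda)$. Taking linearly independent $\lambda_2,\ldots,\lambda_t \in \Lambda$ realizing $\mu_2(\Lambda),\ldots,\mu_t(\Lambda)$ and projecting, the images $\pi(\lambda_2),\ldots,\pi(\lambda_t)$ remain linearly independent in $\bar\Lambda$ and have norms at most $\mu_2(\Lambda),\ldots,\mu_t(\Lambda)$, whence $\mu_i(\bar\Lambda) \leq \mu_{i+1}(\Lambda)$ for $1 \leq i \leq t-1$. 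Applying the induction hypothesis to $\bar\Lambda$ in dimension $n-1$ at index $t-1$ yields
\[
\prod_{i=2}^{t}\mu_i(\Lambda) \;\leq\; \prod_{i=1}^{t-1}\mu_i(\bar\Lambda) \;\leq\; \gamma_{n-1}^{(t-1)/2}\bigl(\det(\Lambda)/\mu_1(\Lambda)\bigr)^{(t-1)/(n-1)}.
\]

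Multiplying by $\mu_1(\Lambda)$ leaves $\mu_1(\Lambda)^{(n-t)/(n-1)}$ on the right; since the exponent is non-negative, I can substitute the $t=1$ bound $\mu_1(\Lambda) \leq \gamma_n^{1/2}\det(\Lambda)^{1/n}$, and a short computation shows that the combined exponent of $\det(\Lambda)$ telescopes to exactly $t/n$. It remains to check
\[
\gamma_{n-1}^{(t-1)/2}\,\gamma_n^{(n-t)/(2(n-1))} \;\leq\; \gamma_n^{t/2},
\]
which follows from the classical monotonicity $\gamma_{n-1} \leq \gamma_n$ of the Hermite constants combined with the elementary verification $(t-1)/2 + (n-t)/(2(n-1)) \leq t/2$, which reduces to $t \geq 1$. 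The main obstacle is not conceptual but bookkeeping: one has to track the exponents of $\det(\Lambda)$, $\mu_1(\Lambda)$, $\gamma_n$, and $\gamma_{n-1}$ carefully in order to land exactly at the bound $\gamma_n^{t/2}\det(\Lambda)^{t/n}$, and tacitly appeal to the (standard but not entirely trivial) monotonicity of the Hermite constants in the dimension.
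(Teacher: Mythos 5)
The paper offers no proof of this lemma at all --- it simply cites Theorem 2.6.8 of Martinet's book --- so you had to supply a genuine argument, and your induction on the dimension is structurally sound: the primitivity of $\lambda_1$, the identity $\det(\bar\Lambda)=\det(\Lambda)/\mu_1(\Lambda)$, the bounds $\mu_i(\bar\Lambda)\le\mu_{i+1}(\Lambda)$, and the telescoping of the exponent of $\det(\Lambda)$ to $t/n$ all check out. The gap is in the last step, where you invoke ``the classical monotonicity $\gamma_{n-1}\le\gamma_n$ of the Hermite constants.'' This is not a classical fact; it is an open problem. The exact values of $\gamma_n$ are known only for $n\le 8$ and $n=24$, and for instance the best lattices currently known in dimension $25$ have Hermite invariant strictly smaller than $\gamma_{24}=4$, so even $\gamma_{25}\ge\gamma_{24}$ is not known. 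As written, your proof therefore rests on an unproved assertion.

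The good news is that the inequality you actually need is strictly weaker and is elementary. Comparing exponents, $\gamma_{n-1}^{(t-1)/2}\,\gamma_n^{(n-t)/(2(n-1))}\le\gamma_n^{t/2}$ is, for $t\ge 2$, equivalent to $\gamma_{n-1}\le\gamma_n^{n/(n-1)}$, i.e.\ to $\gamma_n\ge\gamma_{n-1}^{(n-1)/n}$. This follows from the orthogonal-sum construction: for any lattice $L$ of rank $n-1$, the lattice $L\perp\mu_1(L)\IZ$ has first minimum $\mu_1(L)$ and determinant $\mu_1(L)\det(L)$, whence $\gamma_n\ge \mu_1(L)^2\bigl(\mu_1(L)\det(L)\bigr)^{-2/n}=\bigl(\mu_1(L)^2\det(L)^{-2/(n-1)}\bigr)^{(n-1)/n}$; taking the supremum over $L$ gives $\gamma_n\ge\gamma_{n-1}^{(n-1)/n}$. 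Substituting this (together with the trivial bound $\gamma_n\ge 1$, which you also use tacitly when replacing a smaller exponent by a larger one) closes your induction. For comparison, the standard proof --- the one behind the citation in the paper --- avoids any cross-dimensional comparison of Hermite constants: it works entirely in dimension $n$, applying the definition of $\gamma_n$ to $u(\Lambda)$ where $u$ is the self-adjoint map dilating the successive orthogonal directions spanned by a minimal system $\lambda_1,\dots,\lambda_n$ by the factors $\mu_t/\mu_i$. You may want to either adopt that route or patch your constant comparison as indicated.
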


\begin{proof}
This is Theorem 2.6.8 of \cite{MR1957723}.
\end{proof}

\vspace{4mm}
\begin{lem}\label{lemma-bound-hom}
For every lattice $\Lambda$ in $V$ the following inequality holds:
$$m(\Lambda)\leq \left(\frac{\sqrt
2}{\sqrt{n}}\cdot \mu_1( \Lambda )\right)^n.$$
\end{lem}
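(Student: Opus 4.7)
The plan is to apply the AM--GM inequality to a shortest nonzero lattice vector, using a weighting that treats each complex coordinate as two real coordinates. This matches the way the dimension $n = r+2s$ and the norm $N$ count real versus complex slots differently.

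First I would pick a vector $\lambda = (\lambda_1,\ldots,\lambda_{r+s}) \in \Lambda$ realizing (or, by taking a limit, approximating) the first minimum, i.e.\ with $\|\lambda\| = \mu_1(\Lambda)$. Since $\lambda \neq 0$, we have $m(\Lambda) \leq N(\lambda)$, so it suffices to prove the bound with $N(\lambda)$ in place of $m(\Lambda)$.

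Next I would look at $N(\lambda)^2$ and write it as a product of exactly $n = r+2s$ nonnegative real factors: the $r$ factors $\lambda_i^2$ for $i \leq r$ together with two copies of $|\lambda_i|^2$ for each $i$ with $r+1 \leq i \leq r+s$. The AM--GM inequality applied to these $n$ factors gives
\begin{equation*}
N(\lambda)^{2/n} \;\leq\; \frac{1}{n}\left(\sum_{i=1}^r \lambda_i^2 + 2\sum_{i=r+1}^{r+s} |\lambda_i|^2\right).
\end{equation*}
The right-hand side is bounded above by $\tfrac{2}{n}\|\lambda\|^2 = \tfrac{2}{n}\mu_1(\Lambda)^2$ by comparison with the definition \eqref{Eqn:ScalarProdInV} of the scalar product. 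Raising to the $n/2$ power then yields
\begin{equation*}
N(\lambda) \;\leq\; \left(\frac{2\,\mu_1(\Lambda)^2}{n}\right)^{n/2} \;=\; \left(\frac{\sqrt{2}}{\sqrt{n}}\cdot \mu_1(\Lambda)\right)^n,
\end{equation*}
which gives the claim.

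There is no real obstacle here; the only subtle point is the choice of grouping in the AM--GM step. Counting each complex coordinate with multiplicity two is exactly what is needed so that the $n$ factors in the geometric mean combine to give $N(\lambda)^2$ while the arithmetic mean is controlled by $\|\lambda\|^2$ up to the factor of $2$ (which is responsible for the $\sqrt{2}$ in the statement). In the totally real case $s=0$ this factor could be removed, but it is unavoidable as soon as $s > 0$.
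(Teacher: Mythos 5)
Your proof is correct and is essentially the paper's own argument: the same AM--GM step applied to the $n$ factors obtained by counting each complex coordinate twice, yielding $N(v)^{1/n}\leq \tfrac{\sqrt 2}{\sqrt n}\norm{v}$ and then specializing to a shortest vector. No issues.
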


\begin{proof}
For every element $v = (v_1,\ldots,v_{r+s})$ of $V$ we have
$$\sum_{i=1}^r v_i^2 + \sum_{i=r+1}^{r+s} \abs{v_i}^2\:\: \leq \:\: 2\norm{v}^2 \:\:=\:\: 2\sum_{i=1}^r v_i^2 + \sum_{i=r+1}^{r+s} \abs{v_i}^2$$
hence
\begin{equation}\label{in-g-a}
N(v)^{\frac 1n} \leq \frac{\sqrt 2}{\sqrt n}\cdot \norm{v}
\end{equation}
by the inequality between arithmetic and geometric means. The
desired inequality follows from
$$m(\Lambda)^{\frac 1 n}\:\:=\:\: \inf_{\lambda \neq 0}N(\lambda)^{\frac 1 n}
\:\:\leq\:\: \frac{\sqrt 2}{\sqrt n}\cdot \inf_{\lambda\neq 0}
\norm{\lambda} \:\:=\:\: \frac{\sqrt 2}{\sqrt{n}}\cdot
\mu_1(\Lambda)$$ where the infima are taken over all non-zero
elements $\lambda$ of $\Lambda$.
\end{proof}

\vspace{4mm}
\begin{lem}\label{lemma-bound-inhom}
The following inequality holds for every lattice $\Lambda$ in $V$:
$$M(\Lambda)\:\: \leq\:\: \left(\frac{1}{\sqrt 2}\cdot\mu_n( \Lambda)\right)^n.$$
\end{lem}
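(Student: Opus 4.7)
The plan is to bound $N(v-c)$ uniformly in $v$ by finding, for every $v \in V$, a lattice point $c$ that is close to $v$ in the Euclidean norm, then to combine this with the arithmetic--geometric inequality \eqref{in-g-a} that appeared in the proof of Lemma \ref{lemma-bound-hom}.

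First I would reduce to a sublattice that has a convenient basis. Choose linearly independent vectors $\lambda_1,\ldots,\lambda_n \in \Lambda\setminus\{0\}$ with $\norm{\lambda_i} = \mu_i(\Lambda)$, and set $\Lambda' := \IZ\lambda_1 + \cdots + \IZ\lambda_n$. Since $\Lambda' \subseteq \Lambda$, one has $M(\Lambda) \leq M(\Lambda')$, so it suffices to bound $M(\Lambda')$.

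The central step is a Babai-style nearest-plane argument relative to the Gram--Schmidt orthogonalization $\lambda_1^*,\ldots,\lambda_n^*$ of $\lambda_1,\ldots,\lambda_n$, which satisfies $\norm{\lambda_i^*} \leq \norm{\lambda_i} = \mu_i(\Lambda)$. Given $v \in V$, I would recursively peel off coordinates: project $v$ onto the line $\R\lambda_n^*$, which is orthogonal to the span of $\lambda_1,\ldots,\lambda_{n-1}$, to get a coefficient $t_n \in \R$; choose $k_n \in \IZ$ with $|t_n - k_n| \leq 1/2$; replace $v$ by $v - k_n\lambda_n$, which changes the coefficient of $\lambda_n^*$ by exactly $k_n$ while only shifting the lower coordinates; then iterate in the orthogonal complement. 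This produces $c \in \Lambda'$ with
\begin{equation*}
v - c \;=\; \sum_{i=1}^n s_i \lambda_i^*, \qquad |s_i| \leq \tfrac12.
\end{equation*}

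By orthogonality of the $\lambda_i^*$,
\begin{equation*}
\norm{v-c}^2 \;=\; \sum_{i=1}^n s_i^2 \norm{\lambda_i^*}^2 \;\leq\; \tfrac14\sum_{i=1}^n \mu_i(\Lambda)^2 \;\leq\; \tfrac{n}{4}\,\mu_n(\Lambda)^2,
\end{equation*}
so $\norm{v-c} \leq \tfrac{\sqrt n}{2}\mu_n(\Lambda)$. Plugging this into inequality \eqref{in-g-a} gives $N(v-c)^{1/n} \leq \tfrac{\sqrt 2}{\sqrt n}\cdot \tfrac{\sqrt n}{2}\mu_n(\Lambda) = \tfrac{1}{\sqrt 2}\mu_n(\Lambda)$, and taking the supremum over $v$ yields the claimed bound.

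The only real obstacle is the Babai reduction step: a naive bound $\norm{v-c} \leq \tfrac12\sum_i \norm{\lambda_i} \leq \tfrac{n}{2}\mu_n$ is off by a factor of $\sqrt n$ and would produce the wrong final constant, so one genuinely needs the orthogonal (Gram--Schmidt) basis rather than the original $\lambda_i$ to save the factor of $\sqrt n$.
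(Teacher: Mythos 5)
Your proof is correct and is essentially the paper's own argument: the paper's choice of an orthonormal basis making the $\lambda_i$ upper triangular is exactly the (normalized) Gram--Schmidt orthogonalization, and its successive choice of the integers $k_j$ with $|b_j'|\leq\frac12|a_{jj}|$ is precisely your Babai nearest-plane step, yielding the same bound $\|v-c\|\leq\frac{\sqrt n}{2}\mu_n(\Lambda)$ followed by the AM--GM inequality \eqref{in-g-a}.
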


\begin{proof}
Let $v\in V$. By definition of the successive minima, there exist
linearly independent elements $\lambda_1,\ldots,\lambda_n$ of
$\Lambda$ satisfying $\norm{\lambda_i}\leq \mu_n(\Lambda)$ for all
$i\in\{1,\ldots,n\}$. Choose an orthonormal basis $e_1,\ldots,e_n$
of $V$ such that each $\lambda_i$ is written as
$$\lambda_i\:\:=\:\:\sum_{j=1}^ia_{ij}e_j\:\:,$$
in other words, the matrix $(a_{ij})$ is upper triangular. Write
$v=b_1e_1+\cdots+b_ne_n$ and successively choose integers
$k_1,\ldots,k_n$ such that the coefficients $b_j'$ in
$$v-(k_1\lambda_1+\cdots+k_n\lambda_n)\:\:=\:\:\sum_{j=1}^n b_j'e_j$$
satisfy $|b_j'|\leq \frac{1}{2}|a_{jj}|$. Because of
$\|\lambda_i\|\leq \mu_n(\Lambda)$ we have
 $|a_{ij}|\leq \mu_n(\Lambda)$ and hence $|b_j'|\leq \frac{1}{2} \mu_n(\Lambda)$. Setting $\lambda=k_1\lambda_1+\cdots+k_n\lambda_n$, we
obtain the inequality
$$\|v-\lambda\|^2\:\:=\:\:\sum_{j=1}^n |b_j'|^2\:\:\leq\:\: \frac{n}{4}\cdot \mu_n^2(\Lambda)$$
or equivalently $\|v-\lambda\|\leq \frac{\sqrt{n}}{2}\cdot
\mu_n(\Lambda)$. From the inequality between arithmetic and
geometric means, as in \eqref{in-g-a}, we obtain
$$N(v - \lambda) \:\:\leq \:\: \left(\frac{\sqrt 2}{\sqrt n} \cdot\norm{v-\lambda}\right)^n \:\:\leq\:\: \left(\frac{1}{\sqrt 2}\cdot\mu_n( \Lambda)\right)^n\:\:,$$
and since $v$ was arbitrary, the desired inequality follows.
\end{proof}

\vspace{14mm}
\section{McMullen's topological methods}\label{Sec:McMullen}

\begin{par}
A lattice $\Lambda \subseteq \IR^n$ is said to be \emph{well
rounded} if all of its successive minima are equal. A conjecture of
Woods (\cite{MR0302570}) predicts that the \textit{covering radius}
$\rho(\Lambda)$ of a well rounded lattice $\Lambda$ satisfies the
inequality
$$\rho(\Lambda):=\sup_{v\in \IR^n} \inf_{\lambda\in \Lambda} \|v-\lambda\|\leq \frac{\sqrt{n}}{2}\cdot \det(\Lambda)^{\frac{1}{n}}.$$
Covering radii and inhomogeneous minima are linked by the inequality
between arithmetic and geometric means
$$N(v)^{\frac{1}{n}}\leq \frac{\|v\|}{\sqrt{n}},$$
so that, to prove Minkowski's conjecture for $\Lambda \subseteq
\IR^n$ it is sufficient to show that there exists a well rounded
lattice $\Lambda'$ with the same inhomogeneous minimum as $\Lambda$,
and to prove that Woods' conjecture holds for $ \Lambda'$.
\end{par}

\begin{par}
The set of lattices in $\IR^n$ can be identified with the quotient
$\GL_n(\IR)/\GL_n(\IZ)$ and in particular it has the structure of a
differentiable real manifold of dimension $n^2$ with a
differentiable transitive left action by $\GL_n(\IR)$. McMullen
proved in \cite{MR2138142} that if the closure of the orbit of a
lattice $\Lambda \subseteq \IR^n$ under the group of diagonal
matrices with positive entries and determinant 1 is compact, then
some lattice in that closure is well rounded.
\end{par}

\begin{par}
In the case where the orbit of $\Lambda \subseteq \IR^n$ is already
compact, McMullen's result follows with some effort from the
following theorem (Theorem 1.6 of \cite{MR2138142}).
\end{par}

\begin{thm}\label{thm-covering}
Let $T$ be a real torus and let $U_1,\ldots,U_m$ be open subsets of
$T$ which cover $T$. Suppose that the inequality
$$\rank (H_1(W,\Z)\rightarrow H_1(T,\Z))<p$$
holds for every $p\in \{1,\ldots,m\}$ and every connected component
$W$ of $U_p$. Then $m$ is strictly larger than the dimension of $T$.
\end{thm}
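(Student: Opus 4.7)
My plan is to translate the hypothesis into a vanishing statement for restriction maps in cohomology and then run a Lusternik--Schnirelmann-style relative cup-product argument, exploiting the exterior-algebra structure of $H^{*}(T;\mathbb{R})$.

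The first step is to upgrade the $H_{1}$-rank hypothesis to a vanishing in all cohomological degrees $\geq p$. Fix a connected component $W$ of $U_{p}$. After tensoring with $\mathbb{R}$ and dualising, the condition $\rank(H_{1}(W;\mathbb{Z})\to H_{1}(T;\mathbb{Z}))<p$ is equivalent to the statement that the image $V_{W}$ of the restriction $H^{1}(T;\mathbb{R})\to H^{1}(W;\mathbb{R})$ satisfies $\dim V_{W}<p$. Since $T$ is a torus we have $H^{q}(T;\mathbb{R})=\bigwedge^{q}H^{1}(T;\mathbb{R})$, so the restriction in degree $q$ factors through $\bigwedge^{q}V_{W}$, which is zero whenever $q\geq p$. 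Taking the union over components shows that $H^{q}(T;\mathbb{R})\to H^{q}(U_{p};\mathbb{R})$ is identically zero for every $q\geq p$.

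The second step is to use the relative cup product. Any class $\beta_{p}\in H^{q_{p}}(T;\mathbb{R})$ with $q_{p}\geq p$ lifts, via the long exact sequence of the pair $(T,U_{p})$, to a class in $H^{q_{p}}(T,U_{p};\mathbb{R})$. The external cup product of these lifts lies in
$$H^{q_{1}+\cdots+q_{m}}\bigl(T,\,U_{1}\cup\cdots\cup U_{m};\mathbb{R}\bigr)=H^{q_{1}+\cdots+q_{m}}(T,T;\mathbb{R})=0,$$
so $\beta_{1}\cup\cdots\cup\beta_{m}$ vanishes in $H^{*}(T;\mathbb{R})$. To conclude, I would assume $m\leq d=\dim T$ and try to realise the nonzero top class $\omega=\alpha_{1}\cup\cdots\cup\alpha_{d}\in H^{d}(T;\mathbb{R})$ as such a product, obtaining the required contradiction.

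The main obstacle I expect is degree accounting. Choosing each $\beta_{p}$ in the minimal allowed degree $q_{p}=p$ gives $\sum q_{p}=m(m+1)/2$, so the direct argument only yields $m(m+1)/2>d$, i.e.\ $m\gtrsim\sqrt{2d}$, which is strictly weaker than the desired $m>d$. To close the gap one must exploit the additional flexibility that each $\beta_{p}$ may be taken in \emph{any} degree $\geq p$ and combine this with an induction on $\dim T$: for instance by quotienting $T$ by a subtorus associated to a low-rank component of some $U_{p}$, obtaining a cover of a lower-dimensional torus to which the inductive hypothesis can be applied. Arranging the rank inequalities to descend cleanly to the quotient cover, and extracting the sharp bound $m>d$ from the refined accounting, is the delicate step I expect to be hardest.
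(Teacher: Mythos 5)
The paper does not prove this statement at all: it is quoted verbatim as Theorem 1.6 of McMullen \cite{MR2138142}, so your attempt has to stand on its own. Your first two steps are correct and cleanly done: $\dim V_W=\rank(H_1(W;\Z)\to H_1(T;\Z))<p$, the restriction $H^q(T;\R)\to H^q(U_p;\R)$ vanishes for all $q\geq p$ because $H^*(T;\R)$ is generated in degree one and $\bigwedge^q V_W=0$, and the relative cup product then forces $\beta_1\cup\cdots\cup\beta_m=0$ whenever $\deg\beta_p\geq p$. But, as you yourself observe, this only rules out covers with $m(m+1)/2\leq\dim T$, i.e.\ it yields $m\gtrsim\sqrt{2\dim T}$ rather than $m>\dim T$. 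This is a structural limitation, not a bookkeeping slip: the hypothesis on $U_p$ only gives vanishing of restrictions in degrees $\geq p$, so any cup-length argument must spend total degree at least $1+2+\cdots+m$. When all ranks are $0$ the statement reduces to $\mathrm{cat}(T^d)\geq d+1$ and cup-length works with total degree $m$; the entire content of the theorem is the graded weakening of the hypothesis for $p\geq 2$, which ordinary cup products cannot detect.

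The repair you sketch (induction on $\dim T$ by passing to a quotient torus $T/T'$) does not close the gap as stated. Under the quotient map the components of $U_p$ can merge, so a component of the image of $U_p$ is the image of a union of components of $U_p$ and the rank of its $H_1$-image in $H_1(T/T')$ can exceed $p-1$; even for a single component there is no mechanism forcing the rank bound $<p$ to persist downstairs, which is exactly what the inductive hypothesis would require. McMullen's actual argument abandons cohomology of $T$ altogether: one lifts to the universal cover $\R^d$, notes that each connected component of the preimage of a component $W$ of $U_p$ is invariant under the image $\Gamma_W$ of $\pi_1(W)\to\Z^d$ (of rank $<p$) and lies within bounded distance of an affine subspace of dimension $p-1$, and then derives a contradiction from a Lebesgue-type covering theorem for $\R^d$. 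If you want a complete proof you should follow that dimension-theoretic route; the cup-product route genuinely stalls at the triangular-number bound.
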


\vspace{4mm}
\begin{para}\label{Par:SetupMcMullen}
Let $V=\IR^r \oplus \IC^{s}$ be the vector space introduced in
\ref{Para:IntroLatticeOfSignature}, equipped with its scalar product
and the norm map. Let $G$ be the group of diagonal matrices
$g = {\rm diag}(g_1,\dots,g_{r+s})$ with positive real entries $g_i$
such that $$g_1\dots g_r (g_{r+1}\dots g_{r+s})^2 = 1.$$
The group $G$ acts by $\IR$-linear transformations on $V$ and
preserves the norm. Hence, $G$ acts on the space of lattices in $V$
and preserves homogeneous and inhomogeneous minima.
\end{para}

\vspace{4mm}
\begin{thm}\label{thm-succ-minima}
Let $\Lambda \subseteq V$ be a lattice such that $G\Lambda$ is
compact. Then there exists $g\in G$ such that the equalities
$\mu_{s+1}(g\Lambda) =\cdots = \mu_n(g\Lambda)$ hold.
\end{thm}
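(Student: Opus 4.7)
The plan is to deduce this from McMullen's topological Theorem \ref{thm-covering} applied to the compact orbit.

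First I would identify $T := G\Lambda$ as a torus. Since $G \cong \IR^{r+s-1}$ acts continuously on the space of lattices in $V$ and the stabilizer of $\Lambda$ in $G$ is automatically discrete (preserving a lattice is a discrete condition), the hypothesis that $G\Lambda$ is compact makes $\Gamma := \mathrm{Stab}_G(\Lambda)$ a discrete cocompact subgroup, hence a lattice in $G$. Thus $T \cong G/\Gamma$ is a real torus of dimension $r+s-1$, and each successive minimum $\mu_p$ descends to a continuous function on $T$.

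Next I would argue by contradiction. Assume no $g\Lambda \in T$ satisfies $\mu_{s+1}(g\Lambda) = \cdots = \mu_n(g\Lambda)$. Since the $\mu_p$ are non-decreasing in $p$, every $g\Lambda \in T$ lies in at least one of the open sets
$$U_p := \{g\Lambda \in T : \mu_p(g\Lambda) < \mu_{p+1}(g\Lambda)\}, \qquad p \in \{s+1, \ldots, n-1\}.$$
These $r+s-1$ sets therefore cover $T$. Relabeled as $U'_1, \ldots, U'_{r+s-1}$ via $U'_{p-s} := U_p$, Theorem \ref{thm-covering} would yield $r+s-1 > \dim T = r+s-1$, the desired contradiction, as soon as one verifies that
$$\rank\bigl(H_1(W,\IZ) \longrightarrow H_1(T,\IZ)\bigr) < p-s$$
for every $p$ and every connected component $W$ of $U_p$.

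Verifying this rank bound is the main obstacle. The strategy is as follows. On $U_p$ the strict inequality $\mu_p < \mu_{p+1}$ implies that the first $p$ successive minima of $g\Lambda$ are realized by vectors spanning a unique primitive rank-$p$ sublattice, necessarily of the form $gM$ for some primitive $M \subseteq \Lambda$ of rank $p$. Only finitely many such $M$ have covolume below a given bound, so on a connected component $W$ of $U_p$ the sublattice $M$ is locally, hence globally, constant. The $g$-motion inside $W$ must then keep the vectors of $gM$ short. Analyzing how the diagonal group $G$ acts on $p$-dimensional $\IR$-subspaces of $V = \IR^r \oplus \IC^s$, one should show that such motion is confined, modulo the stabilizer of $M \otimes \IR$ in $G$, to a subgroup $H \subseteq G$ of dimension at most $p-s-1$. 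Consequently the image of $\pi_1(W) \to \pi_1(T) \cong \Gamma$ has rank at most $\dim H \le p-s-1$, giving the required bound. The shift by $s$ reflects the isotropic scaling of $G$ on each complex factor of $V$: two real dimensions yield a single scaling parameter, which is also why one can only equalize the top $r+s$ minima and not all $n$.
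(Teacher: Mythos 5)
Your overall architecture is sound and is essentially the paper's: partition (here, cover) the torus $T=G/G_\Lambda$ according to the rational subspace $M$ spanned by the short lattice vectors, show $M$ is constant along paths, bound the rank of the image of $\pi_1$ of each piece in $\pi_1(T)\cong G_\Lambda$, and invoke Theorem \ref{thm-covering}. Your sets $U_p=\{\mu_p<\mu_{p+1}\}$ are open, which pleasantly sidesteps the semialgebraic triangulation the paper needs because its strata are not open. But the step you yourself flag as ``the main obstacle'' is where the proof lives, and your sketch of it fails. A connected component $W$ of $U_p$ is an open, hence full-dimensional, subset of $T$, so the motion of $g$ inside $W$ is \emph{not} confined to any proper subgroup $H\subseteq G$; no dimension count of the kind you describe can bound the rank of the image of $\pi_1(W)$. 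What actually bounds it is arithmetic: the image lies in the stabiliser $G_{M,\Lambda}=\{h\in G_\Lambda : hM=M\}$ (because $M(hg)=h^{-1}M(g)$ and $M$ is path-constant), and $G_{M,\Lambda}$ generates a number field $k$ whose degree $e$ divides both $n=\dim_\Q(\Lambda\otimes\Q)$ and $\dim M$, so that $\rank(G_{M,\Lambda})\le\rank(\cO_k^\ast)\le e-1\le\gcd(n,\dim M)-1$. This is the paper's Lemma \ref{key-lemma}, the one genuinely number-theoretic input, and nothing in your proposal produces it or a substitute for it.

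Even granting that bound, your labelling of the covering sets is the wrong way round. You give index $p-s$ to the set whose short vectors span a $p$-dimensional $M$, so you would need $\gcd(n,p)\le p-s$; this fails for $s\ge1$, e.g.\ for $n=4$, $s=1$, $p=2$ you need the image of $\pi_1(W)$ to be trivial, whereas $\gcd(4,2)-1=1$, and a quartic field of signature $(2,1)$ containing a real quadratic subfield $F$ yields a rank-one subgroup of $G_{M,\Lambda}$ coming from $\cO_F^\ast$ for the corresponding $2$-dimensional $M$. The correct assignment is by \emph{codimension}: give the set $\{\mu_{n-j}<\mu_{n-j+1}\}$ the index $j$ for $j=1,\dots,r+s-1$; there $\dim M=n-j$ and $\rank(G_{M,\Lambda})<\gcd(n,n-j)=\gcd(n,j)\le j$, exactly what Theorem \ref{thm-covering} requires, and the contradiction $r+s-1>\dim T$ then goes through as you intended. (For $s=0$ both labellings happen to work, since $\gcd(n,d)\le\min(d,n-d)$, which is why the discrepancy is invisible in McMullen's original setting; your closing remark attributing the shift by $s$ to the complex scaling directions is, accordingly, a red herring.)
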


\vspace{4mm}
\begin{rem}
For $s=0$, Theorem \ref{thm-succ-minima} is McMullen's Theorem 4.1
in \cite{MR2138142}. With a few adaptations, McMullen's method works
also in our superficially more general setup. For the convenience of
the reader, we check this in detail.  Notice that we cannot expect
to obtain more equalities between successive minima than stated.
This is clear from the case $r=0$. We could alternatively demand for
any $0\leq k \leq s$ the equalites $\mu_{1+k}(g\Lambda)= \cdots =
\mu_{r+s+k}( g\Lambda)$ to hold.
\end{rem}

\vspace{4mm}
\begin{para}
We fix for the rest of this section a lattice $\Lambda \subseteq V
:= \IR^r \oplus \IC^s$ such that $G\Lambda$ is compact. To say that
$G\Lambda$ is compact is to say that the stabiliser
$$G_\Lambda:=\{g \in G \tq g\Lambda=\Lambda\}$$
of $\Lambda \subseteq V$ is cocompact in $G$. Yet in other words,
since $G_\Lambda$ is discrete in $G \simeq \IR^{r+s-1}$, the
quotient $T := G/G_\Lambda$ has to be a real torus of dimension
$r+s-1$. For every $g\in G$, we define:
\begin{eqnarray*}
D(g) & := & \{ \lambda \in \Lambda \tq \norm{g\Lambda} < \mu_n(g\Lambda)\}\\
M(g) & := & \mbox{the real subspace of $V$ generated by $D(g)$}
\end{eqnarray*}
The set $D(g)$ is a finite and nonempty subset of $\Lambda$, and
$M(g) \subseteq V$ is a proper rational subspace, by which we mean
that $M(g)$ is not equal to $V$, and that $M(g)$ is generated by
lattice vectors. Notice that $\dim M(g)$ is the smallest integer
$p\geq 0$ for which $\mu_{p+1}(g\Lambda) = \cdots = \mu_n(g\Lambda)$
holds. Thus, in order to prove Theorem \ref{thm-succ-minima}, we
have to show that there exists a $g\in G$ with $\dim M(g)\leq s$.
\end{para}

\vspace{4mm}
\begin{para}
For every integer $1\leq p \leq n$, let us define $G(p)$ as  the set
of those $g\in G$ with $\dim M(g)=n-p$. The subsets $G(p)$ of $G$
form a partition of $G$, and these sets are stable under the
translation action of $G_\Lambda$. Indeed, for every $g\in G$ and $h
\in G_\Lambda$ we have
$$D(gh)\:\:=\:\:\{\lambda\tq \norm{gh\lambda} < \mu_n(g\Lambda)\} \:\:=\:\: \{h^{-1} \lambda \tq \norm{g\lambda} < \mu_n(g\Lambda)\} \:\:=\:\: h^{-1}D(g)$$
hence $M(gh) = h^{-1}M(g)$ and $\dim M(gh) =  \dim M(g)$. Therefore,
as $p$ ranges over $1,2,\ldots n$, the sets
$$T(p) := G(p)/G_\Lambda \:\:=\:\: \{ gG_\Lambda \tq \dim M(g)=n-p\}$$
form a partition of the torus $T := G/G_\Lambda$. We will need to
know that the subsets $T(p)$ of $T$ are not too pathological, in
particular we want to know that each $T(p)$ admits an open
neighbourhood of which $T(p)$ is a deformation retract.
\end{para}

\vspace{4mm}
\begin{para}
Let us recall some definitions and a theorem from real algebraic
geometry. Let $U$ be a nonepty open subset of some finite
dimensional real vector space.  A subset of $U$ is said to be
\emph{semialgebraic} if it belongs to the  smallest family of
subsets of $U$ which is closed under finite unions, finite
intersections and complements, and which contains the sets
$$\{x\in U \tq f(x)\geq 0\}$$
for polynomial functions $f:U\to \IR$. A central result in real
algebraic geometry is that compact semialgebraic sets admit a finite
triangulation. The following is a light version of Theorem 2.6.12 of
\cite{BR90}:

\begin{thm}\label{Thm:CWDecomposition}
Let $X\subseteq \IR^n$ be a compact semialgebraic set, and let $X_1,
\ldots, X_m$ be a partition of $X$ by semialgebraic subsets. There
exists a finite simplicial complex (a polyhedron) $K$ and a
homeomorphism $h:X\to K$, such that each $X_p$ is a union of sets
$h^{-1}(F)$ for some relatively open faces $F$ of $K$.
\end{thm}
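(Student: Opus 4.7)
The result is a classical triangulation theorem in semialgebraic geometry, and my plan would be to reduce it, by induction on the ambient dimension $n$, to the construction of a cylindrical algebraic decomposition (CAD) of $\IR^n$ compatible with the given partition. First, I would fix a finite list $f_1,\ldots,f_N$ of real polynomials on $\IR^n$ such that each $X_p$, and $X$ itself, is a boolean combination of the sign conditions $\{f_j\ge 0\}$; such a list exists by the very definition of a semialgebraic set. The target would be a finite partition of $\IR^n$ into semialgebraic cells $\{C_\alpha\}$ with the properties that (a) each $C_\alpha$ is semialgebraically homeomorphic to an open cube $(0,1)^{k_\alpha}$, (b) every $f_j$ has constant sign on each $C_\alpha$---so that $X$ and each $X_p$ are unions of cells---and (c) the closure of each cell is a union of cells (the frontier condition). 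From such a CAD, producing the polyhedron $K$ and the homeomorphism $h:X\to K$ is essentially combinatorial: one triangulates each closed cube compatibly with its faces and glues.

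Next, I would build the CAD by induction on $n$. For $n=1$, a compact semialgebraic subset of $\IR$ is a finite union of points and closed intervals, so the decomposition is immediate. For the inductive step, I would use the projection $\pi:\IR^n\to\IR^{n-1}$ dropping the last coordinate. Appropriate resultants and discriminants of the $f_j$, taken in the last variable, give a finite family of polynomials on $\IR^{n-1}$; applying the induction hypothesis to them yields a CAD of $\IR^{n-1}$ adapted to this family. Over each base cell $B$, the real roots of the $f_j$ in the last variable organise into continuous semialgebraic sections $\xi_1(x)<\cdots<\xi_\ell(x)$ on $B$, and $\pi^{-1}(B)$ decomposes into the graphs of the $\xi_i$ together with the open strips between consecutive graphs. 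This yields the cells above $B$, and only finitely many are needed to cover the compact set $X$.

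The main obstacle will be enforcing the frontier condition, which is precisely what allows the cells to glue into a well-defined polyhedron and what makes the resulting $h$ continuous on $X$. The standard device is Thom's lemma: before running the construction, I would enrich the list $f_1,\ldots,f_N$ by adjoining all iterated partial derivatives of the $f_j$ with respect to the last variable. On each cell of the resulting refined decomposition, every relevant polynomial has constant sign, which forces the sections $\xi_i$ to extend continuously to $\overline{B}$ and forces adjacent strips to share common boundary sections. Once the frontier condition is in hand, producing compatible simplicial subdivisions of the closed cubes face-by-face is routine and delivers the desired homeomorphism $h$ with each $X_p$ realised as a union of relatively open faces of $K$.
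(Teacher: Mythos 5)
The paper itself offers no proof of this statement: it is quoted as a known result, with the proof delegated to Theorem 2.6.12 of \cite{BR90}. So the relevant comparison is with the standard proof in the literature, whose overall architecture your sketch does approximate (induction on the ambient dimension, projection to $\IR^{n-1}$, decomposition of the fibres into graphs of root functions $\xi_1<\cdots<\xi_\ell$ and the bands between them, then a lift of a triangulation of the base). There are, however, two genuine gaps. The first concerns the frontier condition. Adjoining the iterated $\partial/\partial x_n$--derivatives and invoking Thom's lemma controls the signs of the $f_j$ along each individual fibre over a fixed base cell $B$; it does not force the sections $\xi_i$ to extend continuously to $\overline{B}$, nor does it prevent the closure of a graph over $B$ from failing to be a graph over $\overline{B}$ (for instance when some $f_j$ vanishes identically on the fibre over a boundary point of $B$, or when the limit set of a section over a boundary point is a whole interval). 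This is exactly the well-known failure of cylindrical decompositions to satisfy the frontier condition, and it is where the real work of the triangulation theorem lies. The standard proofs handle it by a \emph{generic linear change of coordinates} -- so that the projections restricted to the closures of all sets in play are finite and the section functions do extend continuously -- combined with an induction that triangulates the base compatibly with the closures of the projected pieces, not merely with discriminants and resultants. Your sketch omits the change of coordinates entirely, and for an arbitrary coordinate projection the claimed continuity is simply false.

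The second gap is the final step. Even granting a finite decomposition into cells semialgebraically homeomorphic to open cubes and satisfying the frontier condition, the instruction to ``triangulate each closed cube compatibly with its faces and glue'' presupposes that the closure of each $k$-cell is homeomorphic to $[0,1]^k$ and that cells meet along genuine faces, i.e., that one has a regular CW structure. Cells of a cylindrical decomposition need not have this property: the closure of a cell need not be a closed ball, and distinct pieces of its frontier may be identified. Passing from a cell decomposition to a simplicial complex is therefore not ``essentially combinatorial''; the literature proof avoids the issue by never forming the abstract cell complex at all, instead triangulating $\overline{B}$ first, extending the $\xi_i$ over each closed simplex $\overline{\sigma}$, and explicitly subdividing each prism $\{(x,t): x\in\overline{\sigma},\ \xi_i(x)\le t\le\xi_{i+1}(x)\}$ into simplices while verifying the continuity of the resulting homeomorphism by hand. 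Without this (or an equivalent argument), the construction of $K$ and $h$ is not complete.
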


The group $G$ is itself a semialgebraic subset of $\IR^{r+s}$, so we
know what semialgebraic subsets of $G$ are. But we wish to speak
about semialgebraic sets on a real torus, say $T = G/G_\Lambda$. The
real torus $T$ can be given a structure of a compact locally
semialgebraic space (which is: a locally ringed space, locally
isomorphic to a semialgebraic set, see \cite{DK85}, \S I.1,
Definition 3). Theorem \ref{Thm:CWDecomposition} persists in this
generality (\cite{DK85} \S II.4, Theorem 4.4). In elementary terms
however, this means the following: A subset $X$ of $T$ is called
\emph{semialgebraic} if there exist open subsets $U_i\subseteq
\IR^n$ which form an atlas of $T$ via the projection maps
$\varphi_i:U_i\to T$, such that each $\varphi_i^{-1}(X)$ is a
semialgebraic subset of $U_i$.

\begin{cor}\label{Cor:CWDecomposition}
Let $T_1, \ldots, T_m$ be a partition of $T$ by semialgebraic
subsets. There exists a finite simplicial complex $K$ and a
homeomorphism $h:T\to K$, such that each $T_p$ is a union of sets
$h^{-1}(F)$ for some relatively open faces $F$ of $K$.
\end{cor}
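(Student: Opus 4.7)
The plan is to deduce this corollary from Theorem \ref{Thm:CWDecomposition} by reducing the locally semialgebraic torus $T$ to a finite collection of compact semialgebraic pieces and then gluing triangulations.

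First I would make the locally semialgebraic structure on $T = G/G_\Lambda$ explicit. The group $G$ is semialgebraic in $\SL_{r+s}(\R)$, and the logarithm on the diagonal identifies $G$ with $\R^{r+s-1}$ in a semialgebraic way. Since $G_\Lambda$ is a cocompact discrete subgroup acting on $G$ by translations, which become affine translations of $\R^{r+s-1}$ and are therefore semialgebraic, one obtains a finite atlas $\varphi_i \colon U_i \to T$, $i=1,\ldots,k$, with $U_i \subseteq \R^{r+s-1}$ open semialgebraic and transition maps semialgebraic. By compactness of $T$ one can shrink the cover so that there exist compact semialgebraic sets $X_i \subseteq U_i$ whose images $\varphi_i(X_i)$ still cover $T$.

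Next I would pull the partition back. For each pair $(i,p)$, set $X_{i,p} := \varphi_i^{-1}(T_p) \cap X_i$. Since $T_p$ is semialgebraic in $T$ and $\varphi_i$ is a semialgebraic chart, each $X_{i,p}$ is a semialgebraic subset of $X_i$, and together these sets partition $X_i$. Applying Theorem \ref{Thm:CWDecomposition} to each $X_i$ with this partition would produce a finite simplicial complex $K_i$ and a homeomorphism $h_i \colon X_i \to K_i$ in which each $X_{i,p}$ is a union of relatively open faces.

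The main obstacle is the gluing: the local triangulations $h_i$ need not agree on the overlaps $\varphi_i(X_i) \cap \varphi_j(X_j)$. To handle this one refines the partition on each $X_i$ by intersecting with the semialgebraic preimages of all the other images $\varphi_j(X_j)$, and then, by induction on the number of charts, one extends a triangulation from the already-triangulated part to the new compact semialgebraic piece via the relative version of Theorem \ref{Thm:CWDecomposition} applied to the graph of the semialgebraic transition map. This is precisely the content of Theorem 4.4 of \S II.4 of \cite{DK85}, which states that every compact locally semialgebraic space admits a finite simplicial triangulation compatible with any prescribed finite semialgebraic partition. Rather than reproduce the inductive gluing argument in full, I would invoke that theorem with the partition $T_1,\ldots,T_m$ of $T$; the resulting finite simplicial complex $K$ and homeomorphism $h \colon T \to K$ satisfy the required property, because compatibility with the partition means precisely that each $T_p$ is a union of sets $h^{-1}(F)$ for relatively open faces $F$ of $K$.
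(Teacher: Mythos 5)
Your proof is correct, and it ends up taking the first of the two routes that the paper itself mentions: after setting up a semialgebraic atlas and observing that chart-by-chart triangulations need not glue, you fall back on the triangulation theorem for compact locally semialgebraic spaces (Theorem 4.4 of \S II.4 of \cite{DK85}), which indeed yields a finite triangulation of $T$ compatible with the given semialgebraic partition. The paper's own elaborated argument is the second route: instead of a multi-chart atlas, it chooses a single compact semialgebraic set $X\subseteq G$ containing a fundamental domain for $T$, applies the Euclidean Theorem \ref{Thm:CWDecomposition} once to $X$ with the partition by preimages of the $T_p$, and then descends the triangulation to $T$ after subdividing simplices so that the boundary identifications become simplicial. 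The trade-off is clear: your citation of \cite{DK85} is a clean black box but invokes a heavier general theory (and makes your first two paragraphs, the atlas construction and the gluing discussion, essentially discarded scaffolding); the paper's fundamental-domain argument stays within the already-quoted Euclidean statement but leaves implicit the step that the face identifications on the boundary of $X$ can be made simplicial after subdivision --- which is precisely the same kind of gluing issue you identified, only concentrated on a single overlap. Either way the conclusion stands; if you keep your version, you could delete the chart-gluing sketch and cite \cite{DK85} directly, since the inductive extension argument you allude to is not actually carried out and is not needed once that theorem is invoked.
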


We can deduce this corollary either from the triangulation theorem
of \cite{DK85}, but also from the quoted Theorem
\ref{Thm:CWDecomposition}. Indeed, choose for $X\subseteq G$ any
compact semialgebraic set which contains a fundamental domain for
$T$, and for $X_p\subseteq X$ the preimages in $X$ of the subsets
$T_p$. Possibly after subdividing some simplicies, any triangulation
$X\to K$ descends to a triangulation of $T$.
\end{para}

\vspace{4mm}
\begin{lem}\label{Lem:TildeSiLocallyClosed}
The set $T(1) \cup T(2) \cup \cdots\cup T(p)$ is an open,
semialgebraic subset of $T$.
\end{lem}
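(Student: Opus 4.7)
The plan is to show that the preimage $\widetilde U_p := G(1) \cup \cdots \cup G(p)$ of $T(1) \cup \cdots \cup T(p)$ under $G \to T$ is open and semialgebraic in $G$, locally. Using the description $\dim M(g) = n-q$ on $G(q)$ together with the fact that $M(g) \subsetneq V$ is always proper (so $\dim M(g) \leq n-1$), one has $\widetilde U_p = \{g \in G : \dim M(g) \geq n-p\}$. Since each $G(q)$ is $G_\Lambda$-stable (as observed just before the lemma), openness and semialgebraicity of $T(1) \cup \cdots \cup T(p)$ on $T$ are equivalent to the corresponding properties of $\widetilde U_p$ on $G$.

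For openness I would invoke two continuity facts: the map $g \mapsto \|g\lambda\|$ is polynomial in the entries of the diagonal matrix $g$ for each fixed $\lambda$, and $g \mapsto \mu_n(g\Lambda)$ depends continuously on $g$ (a standard property of successive minima of a continuous family of lattices). Given $g_0 \in \widetilde U_p$, pick linearly independent vectors $\lambda_1, \ldots, \lambda_{n-p} \in D(g_0)$. Each strict inequality $\|g_0 \lambda_i\| < \mu_n(g_0 \Lambda)$ then persists on an open neighbourhood of $g_0$, placing the same vectors in $D(g)$ for all nearby $g$ and giving $\dim M(g) \geq n-p$ on that neighbourhood.

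For semialgebraicity I would exploit compactness of $T$ to choose a compact set $K \subseteq G$ whose image covers $T$. On $K$ the continuous function $\mu_n(g\Lambda)$ is bounded above by some $B$, and by discreteness of $\Lambda$ the set
$$F_K := \{\lambda \in \Lambda : \|g\lambda\| \leq B \text{ for some } g \in K\}$$
is finite and contains every lattice vector that could realise $\mu_n(g\Lambda)$ or lie in $D(g)$ for $g \in K$. On $K$ one may therefore write
$$\mu_n(g\Lambda) = \min\bigl\{\max_{1 \leq i \leq n} \|g\lambda_i\| : \lambda_1, \ldots, \lambda_n \in F_K \text{ linearly independent}\bigr\},$$
a minimum over a finite index set of polynomial functions of $g$, hence a semialgebraic function. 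The condition $\dim M(g) \geq n-p$ then becomes the finite disjunction, over linearly independent $(n-p)$-tuples $(\lambda_1, \ldots, \lambda_{n-p}) \in F_K^{n-p}$, of the conjunctions $\bigwedge_i \|g\lambda_i\|^2 < \mu_n(g\Lambda)^2$, and so defines a semialgebraic subset of $K$. Transporting this description along an atlas of $T = G/G_\Lambda$ gives semialgebraicity in the sense recalled in the paragraph before the lemma.

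The main obstacle I expect is the bookkeeping around $\mu_n$: one must express this min-max function semialgebraically on a compact piece of $G$ large enough to surject onto $T$, and verify that substituting it into strict polynomial inequalities preserves semialgebraicity (a routine but notationally heavy instance of Tarski--Seidenberg). The key finiteness input, that $F_K$ is finite, is immediate from discreteness of $\Lambda$ together with compactness of $K$; once this is in hand, the rest reduces to Boolean combinations of polynomial inequalities in the entries of $g$.
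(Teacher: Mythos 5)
Your argument is correct, and it reaches the same two finiteness/continuity inputs as the paper (continuity of $g\mapsto\mu_n(g\Lambda)$, and the fact that only finitely many lattice vectors are relevant on a compact piece of $G$), but it organizes the semialgebraicity step differently. The paper works purely locally: around each $g_0$ it takes the neighbourhood $U$ of those $g$ for which lattice points strictly inside (resp.\ outside) the critical ellipsoid $E(g_0)$ stay strictly inside (resp.\ outside) $E(g)$, and then describes $G(q)\cup\cdots\cup G(p)$ on $U$ by comparisons $\norm{g\lambda_0}^2\geq\norm{g\lambda}^2$ among the finitely many lattice points lying \emph{on} $E(g_0)$ --- thereby never having to exhibit $\mu_n(g\Lambda)$ itself as a semialgebraic function. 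You instead work globally on one compact set $K$ surjecting onto $T$, prove that $\mu_n(g\Lambda)$ equals a min--max of finitely many (square roots of) polynomials on $K$, and substitute this into the defining inequalities; this is slightly heavier on the Tarski--Seidenberg side but gives a cleaner single formula, and your openness argument (persistence of the $n-p$ strict inequalities $\norm{g\lambda_i}<\mu_n(g\Lambda)$ for independent $\lambda_i\in D(g_0)$) is a streamlined version of the paper's inside/outside neighbourhood. Two small points to make explicit: finiteness of $F_K$ needs the uniform bound $\norm{gv}\geq c\norm v$ for $g\in K$ (boundedness of $g^{-1}$ on the compact set $K$), not just discreteness of $\Lambda$; and to match the paper's chart-based definition of semialgebraic subsets of $T$ you should replace the compact $K$ by finitely many \emph{open} semialgebraic chart domains contained in a slightly larger compact set --- both are routine.
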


\begin{proof}
\begin{par}
We start by showing that $G(1) \cup G(2) \cup \cdots\cup G(p)$ is an
open and locally semialgebraic subset of $G$. For every $g\in G$,
let us denote by $E(g) \subseteq V$ the ellipsoid given by the
equation
$$E(g) := \{v\in V \tq \norm{gv} = \mu_n(g\Lambda)\}$$
and notice that $\mu_n(g\Lambda)$ is the largest real number $r$
with the property that the lattice points which lie inside of
$\{v\in V \tq \norm{gv} = r\}$ do not generate $V$ as a real vector
space. The set of lattice points inside of $E(g)$ is indeed the set
$D(g)$. Observe that $\mu_n(g\Lambda)$ is a continuous function of
$g$, and that $E(g)$ varies continuously with $g$, say for the
Hausdorff distance.
\end{par}
\begin{par}
Pick an element $g_0 \in G(p)$. The set of those $g\in G$ with the
property that every lattice point which is on the inside of $E(g_0)$
is also on the inside of  $E(g)$, and every lattice point which is
on the outside of $E(g_0)$ is also on the outside of  $E(g)$, form
an open neighbourhood of $g_0$ in $G$. This neighbourhood, call it
$U$, is contained in
$$G(1) \cup G(2) \cup \cdots\cup G(p) \:\: = \:\: \{g\in G \tq \dim M(g) \geq n-p\}$$
indeed, we have by definition $D(g_0) \subseteq D(g)$ and hence
$M(g_0) \subseteq M(g)$ for all $g\in U$, and thus $U\subseteq
G(1)\cup \cdots \cup G(p)$. This shows that $G(1) \cup G(2) \cup
\cdots\cup G(p)$ is open.
\end{par}
\begin{par}
Next, let us show that $G(q) \cup G(q+1) \cup \cdots \cup G(p)$ is
defined by polynomial inequalities on $U$ for every $1 \leq q \leq
p$. Let $S_0$ be the set of those lattice points which lie on
$E(g_0)$. An element $g\in U$ belongs to $G(q) \cup \cdots \cup
G(p)$ if and only if the following conditions hold:
\begin{enumerate}
\item The set of points $D\subseteq S_0$ which lie inside of $E(g)$ satisfy $\dim \angl{D, M(g_0)} \leq n-q$.
\item The set of points $S \subseteq S_0$ which are on or inside $E(g)$ satisfy $\angl{S, M(g_0)}=V$.
\end{enumerate}
The set of those $g\in U$ satisfying (1) and (2) for given sets
$D\subseteq S$ is described by the quadratic polynomial inequalities
$\norm{g\lambda_0}^2 \geq \norm{g\lambda}^2$ for $\lambda_0 \in
S_0\setminus D$ and $\lambda\in S\setminus D$, which in particular
imply $\norm{g\lambda} =\mu_n(g\Lambda)$ for $\lambda\in S\setminus
D$. This shows that $(G(q)\cup \cdots \cup G(p)) \cap U$ is a finite
union of closed subsets of $U$, each of which is defined by finitely
many polynomial inequalities, namely
$$(G(q)\cup \cdots \cup G(p))\cap U = \bigcup_{D\subseteq S} \big\{g\in U \:\big|\: \norm{g\lambda_0}^2 \geq \norm{g\lambda}^2 \:\: \mbox{for all}\:\: \lambda_0 \in S_0\setminus D, \:\: \lambda \in S\setminus D\big\}$$
where the union runs through all pairs of subsets $D \subseteq S$ of
$S_0$ satisfying $\dim \angl{D, M(g_0)} \leq n-q$ and $\angl{S,
M(g_0)}=V$.
\end{par}
\begin{par}
The quotient map $G\to T$ is locally a polynomial diffeomorphism,
that is a tautology. Hence $T(1) \cup T(2) \cup \cdots\cup T(p)$ is
open. Moreover, since $T$ is compact, there exists a finite covering
of $T$ by open sets $U$ with the property that $T(p+1) \cup \cdots
\cup T(n)$ is given on $U$ as a finite union of closed subsets, each
defined by finitely many polynomial inequalities.
\end{par}
\end{proof}

\vspace{4mm}
\begin{cor}\label{Cor:NbhoodRetract}
Each subset $T(p)\subseteq T$ admits an open neighbourhood of which
$T(p)$ is a deformation retract.
\end{cor}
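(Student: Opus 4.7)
The plan is to combine the triangulation Corollary \ref{Cor:CWDecomposition} with the regular-neighbourhood construction from PL topology. By Lemma \ref{Lem:TildeSiLocallyClosed}, the set $U_q := T(1) \cup \cdots \cup T(q)$ is open in $T$ for every $q$, so its complement
$$L_q := T \setminus U_q = T(q+1) \cup \cdots \cup T(n)$$
is closed, and $T(p) = L_{p-1} \setminus L_p$ is locally closed in $T$.

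First I would apply Corollary \ref{Cor:CWDecomposition} to the semialgebraic partition $\{T(1),\ldots,T(n)\}$ to obtain a finite simplicial complex $K$ and a homeomorphism $h : T \to |K|$ under which each $T(q)$ corresponds to a union of relatively open faces of $K$. Each $L_q$ is then a closed union of relatively open faces. Since a closed union of relatively open faces in a simplicial complex is automatically stable under passage to subfaces (a convergent sequence inside an open face has a limit in every boundary face), each $L_q$ is a genuine subcomplex of $K$.

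Next I would apply the regular neighbourhood theorem to the subcomplex $L_{p-1}$ in the second barycentric subdivision $K''$: this produces an open neighbourhood $N_0 \subseteq T$ of $L_{p-1}$ together with a strong deformation retraction $H_0 : N_0 \times [0,1] \to N_0$ onto $L_{p-1}$, with retraction $r_0 := H_0(\cdot, 1)$. A crucial feature is that on each simplex of the subdivided complex the homotopy is the affine retraction onto the subface spanned by the $L_{p-1}$-vertices, and a direct calculation on such a simplex shows
$$r_0\bigl(H_0(x,s)\bigr) \;=\; r_0(x) \qquad \text{for all } (x,s) \in N_0 \times [0,1].$$

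Finally define $N := N_0 \setminus r_0^{-1}(L_p)$. Since $L_p$ is closed in $T$, the set $r_0^{-1}(L_p)$ is closed in $N_0$, so $N$ is open in $T$; it contains $T(p)$ because every $a \in T(p) \subseteq L_{p-1}$ satisfies $r_0(a) = a \in T(p)$, which is disjoint from $L_p$. Using the displayed identity, for every $(x,s) \in N \times [0,1]$ one has $r_0(H_0(x,s)) = r_0(x) \in T(p)$, so $H_0(x,s) \notin r_0^{-1}(L_p)$; hence $H_0$ restricts to a strong deformation retraction of $N$ onto $T(p)$, which proves the corollary. I expect the main technical point to be verifying the commutation identity $r_0 \circ H_0 = r_0$ for the chosen neighbourhood, but this is built into the standard derived-neighbourhood construction (cf.\ Rourke--Sanderson, \emph{Introduction to piecewise-linear topology}).
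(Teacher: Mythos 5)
Your proof is correct, and its first step coincides with the paper's: apply Corollary \ref{Cor:CWDecomposition} to the partition $\{T(1),\ldots,T(n)\}$ to realise each $T(p)$ as a union of relatively open faces of a finite simplicial complex. Where you diverge is in what happens next. The paper simply asserts the general PL fact that \emph{any} union of relatively open faces of a simplicial complex admits an open neighbourhood of which it is a deformation retract, and stops there. You instead prove the needed instance of that fact, and you do so by exploiting extra structure that the paper's one-line citation does not use: by Lemma \ref{Lem:TildeSiLocallyClosed} the sets $L_q = T(q+1)\cup\cdots\cup T(n)$ are closed, hence (as you correctly argue, since a closed union of relatively open faces is saturated under passage to subfaces) genuine subcomplexes, so that $T(p)=L_{p-1}\setminus L_p$ is a difference of nested subcomplexes. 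Taking the derived regular neighbourhood $N_0$ of $L_{p-1}$ and deleting $r_0^{-1}(L_p)$ is a clean way to produce the neighbourhood, and the identity $r_0\circ H_0(\cdot,s)=r_0$ you rely on does hold for the affine join-structure retraction, which makes the restriction argument go through. The trade-off: your argument is self-contained modulo standard regular-neighbourhood theory but needs the closedness of the $L_q$'s as an input, whereas the paper's version rests on a stronger unproved assertion about arbitrary unions of open faces; your route is the more careful of the two.
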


\begin{proof}

By Corollary \ref{Cor:CWDecomposition}, there exists a finite
simplicial complex $K$ and a homeomorphism $h:T\to K$, such that
each $T(p)$ is a union of sets $h^{-1}(F)$ for some relatively open
faces $F$ of $K$. But any union of relatively open faces on a
simplicial complex, finite or not, admits an open neighbourhood
which is a deformation retract, and we can transport these
neighbourhoods to $T$ via $h$.
\end{proof}

\vspace{4mm}
\begin{lem}\label{lemma-path}
Let $\gamma:[0,1]\rightarrow T(p)$ be a path. Then
$M(\gamma(0))=M(\gamma(1))$.
\end{lem}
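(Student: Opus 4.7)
The plan is to lift the path to $G$ and then prove that $M$ is locally constant along the lift, using a continuity argument combined with the dimension hypothesis.

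\medskip

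\noindent\textbf{Step 1 (lifting).} Since $\pi:G\to T=G/G_\Lambda$ is a covering map, the path $\gamma$ lifts to a continuous path $\tilde\gamma:[0,1]\to G$ with $\pi\circ\tilde\gamma=\gamma$. Because $\gamma([0,1])\subseteq T(p)$, the lift takes values in $G(p)$, so $\dim M(\tilde\gamma(t))=n-p$ for every $t\in[0,1]$. The statement of the lemma is then read as $M(\tilde\gamma(0))=M(\tilde\gamma(1))$ (the subspace $M(g)$ does depend on the choice of lift, since $M(gh)=h^{-1}M(g)$ for $h\in G_\Lambda$, but is unambiguous once a lift is fixed).

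\medskip

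\noindent\textbf{Step 2 (local constancy).} Fix $t_0\in[0,1]$. For every $\lambda\in D(\tilde\gamma(t_0))$ we have the strict inequality
$$\norm{\tilde\gamma(t_0)\lambda} < \mu_n(\tilde\gamma(t_0)\Lambda).$$
The left-hand side is continuous in $t$ (since $g\mapsto\norm{g\lambda}$ is continuous), and the right-hand side is continuous in $t$ as well (by continuity of $\mu_n$ on the space of lattices, a fact already used in the proof of Lemma~\ref{Lem:TildeSiLocallyClosed}). Because $D(\tilde\gamma(t_0))$ is finite, there is an open interval $I$ around $t_0$ such that this strict inequality persists for every $\lambda\in D(\tilde\gamma(t_0))$ and every $t\in I$. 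Hence $D(\tilde\gamma(t_0))\subseteq D(\tilde\gamma(t))$, which gives
$$M(\tilde\gamma(t_0))\subseteq M(\tilde\gamma(t)) \qquad\text{for all } t\in I.$$
Since both subspaces have dimension $n-p$ by Step~1, they are equal.

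\medskip

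\noindent\textbf{Step 3 (connectedness).} The set $S:=\{t\in[0,1]\tq M(\tilde\gamma(t))=M(\tilde\gamma(0))\}$ is open in $[0,1]$ by Step~2. The same argument applied at any $t_0\notin S$ shows that $M(\tilde\gamma(\cdot))$ is locally constantly equal to $M(\tilde\gamma(t_0))\neq M(\tilde\gamma(0))$ near $t_0$, so $[0,1]\setminus S$ is open too. As $S$ contains $0$ and $[0,1]$ is connected, $S=[0,1]$, giving $M(\tilde\gamma(0))=M(\tilde\gamma(1))$, as required.

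\medskip

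\noindent\textbf{Main obstacle.} The only delicate point is conceptual: the subspace $M(g)$ is not invariant under $G_\Lambda$, so the equality $M(\gamma(0))=M(\gamma(1))$ only makes sense after choosing a continuous lift. Once this is set up, the argument is a straightforward combination of continuity of $\norm{g\lambda}$ and of $\mu_n$ with the dimension equality forced by the hypothesis $\gamma([0,1])\subseteq T(p)$; no further input is needed.
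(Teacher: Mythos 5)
Your proof is correct and follows essentially the same route as the paper: establish the inclusion $M(\tilde\gamma(t_0))\subseteq M(\tilde\gamma(t))$ for $t$ near $t_0$ by continuity of $\norm{g\lambda}$ and $\mu_n(g\Lambda)$ together with finiteness of $D(g_0)$, upgrade it to equality via the constant dimension $n-p$, and conclude by local constancy on the connected interval. The paper simply cites this inclusion from the proof of Lemma~\ref{Lem:TildeSiLocallyClosed} rather than re-deriving it, and your explicit remark about lifting to $G$ (since $M(gh)=h^{-1}M(g)$ makes $M$ ill-defined on $T$ itself) is a welcome clarification of a point the paper leaves implicit.
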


\begin{proof}
Pick $t_0\in [0,1]$ and set $g_0 := \gamma(t_0)$. We have seen in
the proof of Lemma \ref{Lem:TildeSiLocallyClosed} that the inclusion
$M(g_0) \subseteq M(g)$ holds for all $g$ in some neighbourhood of
$g_0$, hence $M(\gamma(t_0)) \subseteq M(\gamma(t))$ holds for all
$t$ in some neighbourhood of $t_0$. But we assume $\dim M(\gamma(t))
= n-p$ for all $t$, so the equality $M(\gamma(t_0)) = M(\gamma(t))$
must hold for all $t$ close to $t_0$. The map $t\mapsto
M(\gamma(t))$ is therefore locally constant on $[0,1]$, hence
constant.
\end{proof}

\vspace{4mm}
\begin{lem}\label{key-lemma}
Let $M\subseteq V$ be a real linear subspace of dimension $p$ which
is generated by elements of $\Lambda$, and let $G_{M,\Lambda}
\subseteq G_\Lambda$ be the subgroup consisting of those $g\in
G_\Lambda$ satisfying $gM=M$. We have
$$\rank(G_{M,\Lambda}) < {\rm g.c.d.}(n,p).$$
\end{lem}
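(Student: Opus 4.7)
The plan is to exploit the compactness of $G\Lambda$ in order to identify $\Q[G_\Lambda]$ with a single number field $L$, from which $\Q[G_{M,\Lambda}]$ becomes a subfield $F$. The rank bound will then follow from Dirichlet's unit theorem together with the $F$-vector space structure of $\Lambda_\Q$ and $M_\Q$, which forces $[F:\Q]$ to divide $\gcd(n,p)$.

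The first step, and the main technical hurdle, is to show that $R'_\Q := \Q[G_\Lambda] \subseteq \mathrm{End}_\Q(\Lambda_\Q)$ is a totally real number field of degree $r+s$. Since $G_\Lambda \subseteq G$ consists of commuting, simultaneously diagonalizable endomorphisms with positive real eigenvalues, $R'_\Q$ is an \'etale, totally real $\Q$-algebra, and its scalar extension sits inside the $(r+s)$-dimensional $\R$-algebra spanned by $G$, giving $\dim_\Q R'_\Q \leq r+s$. Writing $R'_\Q \cong \prod_j L_j$ as a product of totally real number fields and using that the rank of the totally positive units of a $\Z$-order in $R'_\Q$ equals $\dim_\Q R'_\Q - l$ (where $l$ is the number of factors), the hypothesis $\rank(G_\Lambda) = r+s-1$ together with $\dim_\Q R'_\Q \leq r+s$ forces $l = 1$ and $\dim_\Q R'_\Q = r+s$. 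Hence $R'_\Q = L$ is a single totally real number field of degree $r+s$.

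Set $F := \Q[G_{M,\Lambda}] \subseteq L$. Since any finite-dimensional $\Q$-subalgebra of a number field is itself a field, $F$ is a totally real subfield of $L$ of some degree $f := [F:\Q]$. The faithful action of $L$ on $\Lambda_\Q$ makes the latter an $L$-, hence an $F$-vector space of dimension $n/f$, so $f \mid n$. The subspace $M_\Q := M \cap \Lambda_\Q$ is $F$-stable (because $G_{M,\Lambda}$ preserves $M$ and generates $F$ over $\Q$), and is therefore itself an $F$-vector space of dimension $p/f$; hence $f \mid p$, and so $f \mid \gcd(n,p)$.

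To conclude, $G_{M,\Lambda}$ is contained in the totally positive units of the order $\Z[G_{M,\Lambda}] \subseteq F$. Dirichlet's unit theorem applied to the totally real field $F$ of degree $f$ bounds the rank of these units by $f - 1$. Combined with $f \leq \gcd(n,p)$, this yields $\rank(G_{M,\Lambda}) \leq f - 1 < \gcd(n,p)$, as required.
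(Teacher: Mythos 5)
Your proof is correct and follows essentially the same route as the paper: the degree $e$ of $k:=\Q[G_{M,\Lambda}]$ divides both $n$ and $p$ because $\Lambda\otimes\Q$ and $(M\cap\Lambda)\otimes\Q$ are $k$--vector spaces, and Dirichlet's unit theorem bounds $\rank(G_{M,\Lambda})$ by $e-1$. The only place you go beyond the paper is your first paragraph: the paper simply declares $\Q[G_{M,\Lambda}]$ to be a number field, whereas you actually justify, using the compactness of $G\Lambda$ (equivalently $\rank(G_\Lambda)=r+s-1$) together with the bound $\dim_\Q\Q[G_\Lambda]\leq r+s$, that $\Q[G_\Lambda]$ is a single totally real field rather than a product of fields --- a point the divisibility argument genuinely needs and which the paper leaves implicit.
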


\begin{proof}
Consider the number field $k :=\Q[G_{M,\Lambda}]$ and set
$e:=[k:\Q]$. We can regard $\Lambda\otimes \Q$ and $(M\cap
\Lambda)\otimes \Q$ as $k_0$-vector spaces, and thus have
$$e \ | \ \dim_\Q (\Lambda\otimes \Q) =n \qquad \text{and} \qquad e \ | \ \dim_\Q ((M\cap \Lambda)\otimes \Q) =p$$
where the last equality holds since $M$ is rational. Therefore, $e$
divides ${\rm g.c.d.}(n,p)$. Since $G_{M,\Lambda}$ embeds into the
group of units $\cO_k^\ast$, we have $\rank(G_{M,\Lambda})\leq
\rank(\mathcal{O}_k^\ast)\leq e-1$, hence the claim.
\end{proof}

\vspace{4mm}
\begin{cor}\label{coro-rank}
For every $1\leq p \leq n$ and every $t\in T(p)$, the image of the
group homomorphism induced by the inclusion $T(p) \subseteq T =
G/G_\Lambda$
$$\rho:\pi_1(T(p),t)\rightarrow \pi_1(T,t)\cong G_\Lambda$$
has rank $\leq {\rm g.c.d.}(n,p)-1$.
\end{cor}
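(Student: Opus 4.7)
The plan is to realise the statement as an assertion about monodromy. Since $G\simeq\IR^{r+s-1}$ is contractible, the quotient map $G\to T=G/G_\Lambda$ is a universal covering, and $\pi_1(T,t)$ is canonically identified with $G_\Lambda$ acting by deck transformations on the right. Under this identification the image of $\rho$ consists precisely of those $h\in G_\Lambda$ such that some path from $g_0$ to $g_0h$ inside $G(p)$ exists, where $g_0\in G(p)$ is a chosen lift of $t$.

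To pin down these $h$, fix a lift $\tilde\alpha:[0,1]\to G(p)$ of a loop $\alpha$ in $T(p)$ based at $t$, with $\tilde\alpha(0)=g_0$ and $\tilde\alpha(1)=g_0h$. The proof of Lemma~\ref{lemma-path} is in fact local on $G$: the inclusion $M(g_0)\subseteq M(g)$ for $g$ in a neighbourhood of $g_0$ was established directly in $G$, and the constraint $\dim M(\tilde\alpha(u))=n-p$ along $\tilde\alpha$ then forces this inclusion to be an equality throughout. Hence $g\mapsto M(g)$ is locally constant, and therefore constant, along $\tilde\alpha$. Combining this with the equivariance $M(g h')=h'^{-1}M(g)$ for $h'\in G_\Lambda$ already observed in the construction of $T(p)$ yields
\[ M(g_0)\ =\ M(\tilde\alpha(1))\ =\ M(g_0h)\ =\ h^{-1}M(g_0),\]
which means exactly that $h\in G_{M(g_0),\Lambda}$.

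Therefore $\rho\bigl(\pi_1(T(p),t)\bigr)\subseteq G_{M(g_0),\Lambda}$. By construction $M(g_0)$ is a rational subspace of $V$, and since $t\in T(p)$ it has dimension $n-p$. Lemma~\ref{key-lemma}, applied to this subspace, now gives
\[ \rank\rho\bigl(\pi_1(T(p),t)\bigr)\ \leq\ \rank G_{M(g_0),\Lambda}\ <\ \gcd(n,n-p)\ =\ \gcd(n,p),\]
which is the stated bound. The only point requiring a little extra care is the upgrade of Lemma~\ref{lemma-path} from $T(p)$ to $G(p)$, but this is immediate because the argument was local on $G$ all along; no genuine obstacle remains.
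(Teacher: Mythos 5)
Your proposal is correct and follows essentially the same route as the paper: identify $\pi_1(T,t)$ with $G_\Lambda$ via the universal covering $G\to T$, lift a loop to a path in $G(p)$, use the local constancy of $g\mapsto M(g)$ (Lemma~\ref{lemma-path}) together with the equivariance $M(gh)=h^{-1}M(g)$ to conclude that the monodromy lands in $G_{M(g_0),\Lambda}$, and then apply Lemma~\ref{key-lemma} to the $(n-p)$-dimensional rational subspace $M(g_0)$. Your remark that Lemma~\ref{lemma-path} should really be read as a statement about the lifted path in $G(p)$ is a fair and correctly resolved clarification of a notational shortcut the paper also takes.
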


\begin{proof}
Choose $g\in G(p)$ in the class of $t$. We show that the image of
$\rho$ is contained in $G_{M(g),\Lambda}$. Since $M(g)$ is generated
by lattice elements and $\dim M(z)=n-p$ the desired conclusion
follows then from Lemma \ref{key-lemma}. Let $[\gamma]\in
\pi_1(T(p),t)$ be the class of a path $\gamma:[0,1]\rightarrow T(p)$
such that $\gamma(0)=\gamma(1)=t$. Lift it to a path
$\tilde{\gamma}:[0,1] \rightarrow G$ with $\tilde{\gamma}(0)=g$.
Setting $h:=\rho([\gamma])\in G_\Lambda$, we have $h
\tilde{\gamma}(0)=\tilde{\gamma}(1)$ by definition of the canonical
isomorphism $\pi_1(T,t) \cong G_\Lambda$. By Lemma \ref{lemma-path}
the equality $M(\tilde{\gamma}(0))=M(\tilde{\gamma}(1))$ holds,
hence
$$M(g)=M(hg)=h^{-1}M(g),$$
and hence $h\in G_{M(g),\Lambda}$ as claimed.
\end{proof}

\vspace{4mm}
\begin{proof}[Proof of Theorem \ref{thm-succ-minima}]
For $1 \leq p \leq n$, let $U_p$ be an open neighborhood of $T(p)$
such that $U_p$ is a retract of $T(p)$. In particular $T(p)=\vide$
if and only if $U_p=\vide$. Such neighborhoods exist by Corollary
\ref{Cor:NbhoodRetract}. We get an open covering $U_1 \cup \cdots
\cup U_n$ of $T$. Corollary \ref{coro-rank} implies that
\begin{equation}\label{Eqn:RankCondition}
\rank (H_1(W,\Z)\rightarrow H_1(T,\Z))< p
\end{equation}
holds for every connected component $W$ of $U_p$. It follows from
\eqref{Eqn:RankCondition} and theorem \ref{thm-covering} that the
sets $U_1, \ldots, U_{r+s-1}$ do not cover $T$, hence $U_p$ and
hence $T(p)$ must be nonempty for some $p\geq r+s$. This implies
that there exists $g\in G$ with $\dim M(g) \leq s$, which was to be
shown.
\end{proof}

\vspace{14mm}
\section{Proof of the Main Theorem}\label{secproof}

\begin{par}
Equip the vector space $V=\IR^r \oplus \IC^{s}$ with the scalar
product and the norm map introduced in
\ref{Para:IntroLatticeOfSignature}, and let $G$ be the group of diagonal matrices
$g = {\rm diag}(g_1,\dots,g_{r+s})$ with positive real entries $g_i$
such that $$g_1\dots g_r (g_{r+1}\dots g_{r+s})^2 = 1.$$
 Theorem \ref{thm-final-new} below gives an
upper bound on the inhomogeneous minimum of every lattice $\Lambda
\subseteq V$ whose $G$--orbit is compact. Our main Theorem stated in
the introduction is a consequence of it.
\end{par}

\vspace{4mm}
\begin{thm}\label{thm-final-new}
Let $\Lambda$ be a lattice in $V$ such that $G\Lambda$ is compact.
The following inequality holds for every $1\leq a\leq r+s$.
$$m(\Lambda)^s\cdot M(\Lambda)^a\:\:\leq\:\: \big ( 2^{s-a}\cdot\gamma_n^{s+a} \cdot n^{-s}\big)^\frac{n}{2}\cdot \det(\Lambda)^{s+a}$$
\end{thm}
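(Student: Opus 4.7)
The plan is to combine the three lemmas from Section \ref{sec2} with Theorem \ref{thm-succ-minima} in a fairly direct calculation. Since the group $G$ preserves the scalar product, the norm map, and the determinant of a lattice, it also preserves $m(\Lambda)$, $M(\Lambda)$ and $\det(\Lambda)$. So by Theorem \ref{thm-succ-minima}, at the cost of replacing $\Lambda$ by $g\Lambda$ for a suitable $g\in G$, I may assume from the outset that
$$\mu_{s+1}(\Lambda) = \mu_{s+2}(\Lambda) = \cdots = \mu_n(\Lambda).$$
This is the only place where the hypothesis that $G\Lambda$ is compact enters, and this reduction is the whole point of the topological work in Section \ref{Sec:McMullen}.

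Next I would feed the bounds from Lemmas \ref{lemma-bound-hom} and \ref{lemma-bound-inhom} into the left-hand side, obtaining
$$m(\Lambda)^s\cdot M(\Lambda)^a \:\leq\: \Bigl(\tfrac{\sqrt 2}{\sqrt n}\Bigr)^{ns}\!\!\Bigl(\tfrac{1}{\sqrt 2}\Bigr)^{na}\mu_1(\Lambda)^{ns}\mu_n(\Lambda)^{na} \:=\: 2^{n(s-a)/2}\,n^{-ns/2}\,\mu_1(\Lambda)^{ns}\mu_n(\Lambda)^{na}.$$
Now I need to convert $\mu_1^{ns}\mu_n^{na}$ into something like $\mu_1\mu_2\cdots\mu_{s+a}$ so that Minkowski's Lemma \ref{Mink} can be applied. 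This is where the normalisation $\mu_{s+1}=\cdots=\mu_n$ pays off: the hypothesis $a\le r+s$ gives $s+a\le n$, so $\mu_{s+a}=\mu_n$ and hence $\mu_{s+1}\mu_{s+2}\cdots\mu_{s+a}=\mu_n^a$. Combined with the trivial inequality $\mu_1^s\le \mu_1\mu_2\cdots\mu_s$ (which holds because $\mu_1\leq \mu_2\leq \cdots \leq \mu_s$), raising to the $n$-th power yields
$$\mu_1(\Lambda)^{ns}\mu_n(\Lambda)^{na} \:\leq\: \bigl(\mu_1(\Lambda)\mu_2(\Lambda)\cdots\mu_{s+a}(\Lambda)\bigr)^n.$$

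Finally, Lemma \ref{Mink} applied with $t=s+a$ gives
$$\mu_1(\Lambda)\cdots\mu_{s+a}(\Lambda) \:\leq\: \gamma_n^{(s+a)/2}\,\det(\Lambda)^{(s+a)/n},$$
so raising to the $n$-th power and substituting back produces
$$m(\Lambda)^s\cdot M(\Lambda)^a \:\leq\: 2^{n(s-a)/2}\,n^{-ns/2}\,\gamma_n^{n(s+a)/2}\,\det(\Lambda)^{s+a} \:=\: \bigl(2^{s-a}\gamma_n^{s+a}n^{-s}\bigr)^{n/2}\det(\Lambda)^{s+a},$$
which is exactly the claim. The conceptually hard part of the argument is already buried in Theorem \ref{thm-succ-minima}; once that reduction is in place, the remaining work is just bookkeeping to line up the exponents so that the product telescope $\mu_1\cdots\mu_{s+a}$ can be extracted. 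The only subtlety to watch for is the constraint $a\le r+s$, without which $\mu_{s+a}$ need not equal $\mu_n$ and the telescoping step would fail.
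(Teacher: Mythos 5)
Your proof is correct and takes essentially the same route as the paper's: normalise via Theorem \ref{thm-succ-minima} so that $\mu_{s+1}=\cdots=\mu_n$, bound $m(\Lambda)^s$ and $M(\Lambda)^a$ by powers of $\mu_1$ and $\mu_n$ using Lemmas \ref{lemma-bound-hom} and \ref{lemma-bound-inhom}, and absorb $\mu_1^s\mu_n^a\leq\mu_1\cdots\mu_{s+a}$ into Lemma \ref{Mink} with $t=s+a$. The only (harmless) slip is the parenthetical claim that $G$ preserves the scalar product --- it does not, as a nontrivial diagonal matrix of determinant one is not orthogonal; what you actually need, and what \S\ref{Par:SetupMcMullen} records, is that $G$ preserves the norm map $N$ and the covolume, hence $m(\Lambda)$, $M(\Lambda)$ and $\det(\Lambda)$.
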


\begin{proof}
By Theorem  \ref{thm-succ-minima} there exists $g\in G$ such that
$\mu_{s+1}(g\Lambda) =\cdots = \mu_{n}(g\Lambda)$ holds. We have
then
\begin{equation}\label{Eqn:ProofMain2-new}
\mu_1(g\Lambda)^s\cdot \mu_n(g\Lambda)^{a}\leq \mu_1(g\Lambda)\cdot
\cdots\cdot \mu_{s}(g\Lambda)\cdot \mu_{n}(g\Lambda)^{a}\leq
\gamma_n^{\frac{s+a}{2}}\cdot \det(g\Lambda)^{\frac{s+a}{n}}
\end{equation}
by Lemma \ref{Mink} with $t=s+a$. Furthermore, we have
\begin{equation}\label{Eqn:ProofMain1-new}
\mu_n(g\Lambda)\geq \sqrt{2}\cdot
M(g\Lambda)^{\frac{1}{n}}=\sqrt{2}\cdot M(\Lambda)^{\frac{1}{n}}
\end{equation}
by Lemma \ref{lemma-bound-inhom}, and
\begin{equation}\label{Eqn:ProofMain3-new}
\mu_1(g\Lambda)\geq \frac{\sqrt n}{\sqrt 2} \cdot m(g\Lambda)^{\frac
1 n}=\frac{\sqrt n}{\sqrt 2} \cdot m(\Lambda)^{\frac 1 n}
\end{equation}
by Lemma \ref{lemma-bound-hom}. The statement of the theorem follows
by combining \eqref{Eqn:ProofMain2-new}, \eqref{Eqn:ProofMain1-new}
and \eqref{Eqn:ProofMain3-new}.
\end{proof}

\vspace{4mm}
\begin{para}\label{Para:CompactnessForNumberFieldLattice}
\begin{par}
Let $K$ be a number field of degree $n=r+2s$ over $\IQ$, with $r$
real embeddings $\sigma_1,\ldots,\sigma_r$ and $s$ non-conjugated
complex embeddings $\sigma_{r+1},\ldots,\sigma_{r+s}$. Let $d_K$ be
the absolute value of the discriminant of $K$. From the chosen
ordering of the embeddings of $K$ we obtain a $\IQ$--linear map
$\sigma: K \to V$ sending $x\in K$ to
$(\sigma_1(x),\ldots,\sigma_{r+s}(x))$. The image of $\cO_K$ under
this map is a lattice $\Lambda := \sigma(\cO_K) \subseteq V$ of
volume $2^{-s}\cdot \sqrt{d_K}$ (see for example \cite{Samuel}, page
57).
\end{par}

\bigskip

\begin{par}
Let $\tilde G$ denote the group of diagonal matrices $g = {\rm diag}(g_1,\dots,g_{r+s})$ with $g_1,\dots,g_r \in \IR^{*}$
and $g_{r+1},\dots, g_{r+s} \in \IC^*$ satisfying $|g_1 \dots g_r (g_{r+1} \dots g_{r+s})^2| = 1$. The group $\tilde G$ 
contains $G$ as a direct factor. Indeed, we have $\tilde G = G \times U$, where $U = (\IZ/ 2 \IZ)^r \times (S^1)^s$.
The action of $G$ on $V$ extends to an action of $\tilde G$ in the obvious way. 

\medskip

\noindent
{\bf Claim.} {\it  $G \Lambda$ is compact. }

\medskip
\noindent
{\it Proof.} It suffices to prove that $\tilde G \Lambda$ is compact. Let $\epsilon:\cO_K^\ast \to G$ be the group homomorphism defined by $\epsilon(x) = {\rm diag}(\sigma_1(x),\dots,\sigma_{r+s}(x))$. 
The equality $\epsilon(x)\sigma(y) = \sigma(xy)$ holds for all $x \in \cO_K^*$ and all $y \in K$. This shows that the image of $\epsilon$
stabilizes the lattice $\Lambda = \sigma(\cO_K)$. In other words, $\epsilon (\cO_K^*)$ is contained in $\tilde G_{\Lambda}$. We have
$\tilde G \Lambda = \tilde G / \tilde G_{\Lambda}$, hence to show that $\tilde G \Lambda$ is compact, it is enough to prove that
$\tilde G / \epsilon(\cO_K^*)$ is compact. This follows from (the proof of) Dirichlet's unit theorem. Indeed, let us define
$L : \tilde G \to \IR^{r+s}$ by ${\rm diag}(g_1,\dots,g_{r+s}) \mapsto ({\rm log}|g_1|,\dots,{\rm log}|g_{r+s}|)$. The kernel of $L$
is the group $U$, which is compact, and the image of $\epsilon (\cO_K^*)$ by $L$ is a lattice in $\IR^{r+s-1}$, hence cocompact. 

\end{par}

\end{para}

\vspace{4mm}
\begin{thm}[Main Theorem]\label{thm:main}
Let $K$ be a number field of signature $(r,s)$ and degree $n =
r+2s$, and let $d_K$ be the absolute value of the discriminant of
$K$. Then
$$ M(K) \:\:\leq\:\:  2^\frac{-s(s+a)}{a}  \cdot \big ( 2^{s-a}\cdot\gamma_n^{s+a} \cdot n^{-s}\big)^\frac{n}{2a}\cdot {d_K}^{\frac{s+a}{2a}}$$
holds for every $1\leq a \leq r+s$.
\end{thm}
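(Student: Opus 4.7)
\begin{par}
The plan is to apply Theorem \ref{thm-final-new} to the specific lattice $\Lambda = \sigma(\OO_K) \subseteq V$ associated to $K$ as in \ref{Para:CompactnessForNumberFieldLattice}. The key input needed from that subsection is that $G\Lambda$ is compact: this follows from Dirichlet's unit theorem, since $\epsilon(\OO_K^\ast) \subseteq G_\Lambda$ is a subgroup of rank $r+s-1$ in $G \cong \R^{r+s-1}$. Hence Theorem \ref{thm-final-new} applies to $\Lambda$ and yields, for every $1 \leq a \leq r+s$,
$$m(\Lambda)^s \cdot M(\Lambda)^a \:\:\leq\:\: \bigl(2^{s-a} \cdot \gamma_n^{s+a} \cdot n^{-s}\bigr)^{n/2} \cdot \det(\Lambda)^{s+a}.$$
\end{par}

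\begin{par}
Next, I would identify the three lattice invariants that appear in this inequality in terms of the arithmetic data of $K$. First, $\det(\Lambda) = 2^{-s}\sqrt{d_K}$ is recorded in \ref{Para:CompactnessForNumberFieldLattice}. Second, $M(\Lambda) = M(K)$ since the norm map on $V$ extends the norm on $K$ and $\sigma(K)$ is dense in $V$; together with Cerri's theorem (cited in Section \ref{sec1}), this matches the Euclidean minimum of $K$ as defined in \eqref{Eqn:DefOfEuclideanMinForNumberField}. Third, $m(\Lambda) = 1$: indeed, for any nonzero $x \in \OO_K$ the absolute value of the norm $N(\sigma(x)) = |\mathrm{N}_{K/\Q}(x)|$ is a positive integer, so $m(\Lambda) \geq 1$, while equality is achieved on $\sigma(1)$.
\end{par}

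\begin{par}
Substituting $m(\Lambda) = 1$, $M(\Lambda) = M(K)$, and $\det(\Lambda) = 2^{-s}\sqrt{d_K}$ into the inequality from Theorem \ref{thm-final-new} gives
$$M(K)^a \:\:\leq\:\: \bigl(2^{s-a}\cdot\gamma_n^{s+a}\cdot n^{-s}\bigr)^{n/2} \cdot 2^{-s(s+a)} \cdot d_K^{(s+a)/2}.$$
Taking $a$-th roots on both sides yields the stated bound, completing the proof. There is no real obstacle here: all the geometric work is packaged into Theorem \ref{thm-final-new}, and the remaining task is a routine translation from the lattice-theoretic invariants of $\sigma(\OO_K)$ to the arithmetic invariants of $K$. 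The only point requiring a brief justification is $m(\Lambda)=1$, which relies on the integrality of the norm on $\OO_K$ together with the existence of units (so that the infimum is attained).
\end{par}
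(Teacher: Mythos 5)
Your proposal is correct and follows exactly the paper's route: apply Theorem \ref{thm-final-new} to $\Lambda=\sigma(\OO_K)$, use the compactness of $G\Lambda$ from \ref{Para:CompactnessForNumberFieldLattice} together with $m(\Lambda)=1$ and $\det(\Lambda)=2^{-s}\sqrt{d_K}$, and take $a$-th roots. The paper leaves the substitution implicit, while you carry it out explicitly; the arithmetic checks out.
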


\begin{proof}
By the Claim, 
the lattice $\Lambda := \sigma(\cO_K)\subseteq V$ associated with
$K$ has a compact $G$--orbit. Since we have $m(\Lambda)=1$, the
inequality follows directly from Theorem \ref{thm-final-new}.
\end{proof}

\vspace{4mm}
\begin{para}
For number fields with a large discriminant, the choice $a=r+s$ will
give the best upper bound in Theorem \ref{thm:main}, whereas for
number fields with a small discriminant also other choices for $a$
can be interesting. For $a=r+s$ and using the estimate $\gamma_n\leq
\frac{n}{2}$ for $n\geq 4$ (Theorem 2.7.4. of \cite{MR1957723} and
page 17 of \cite{MR0506372}) we obtain the theorem stated in the
introduction.
\end{para}

\vspace{4mm}
\begin{rem}
For small degrees $n$, the exact value of the Hermite constant
$\gamma_n$ is known. The following table presents the explicit
bounds that we obtain from Theorem \ref{thm:main} for $n\leq 5$. To
the authors knowledge, these bounds are the best known for $n=4,5$
and $s\neq 0$.

\vspace{4mm}
\begin{center}
\begin{tabular}{|c|c|l|}
  \hline
  $n$ & $s$ & Upper bound for $M(K)$ \\
  \hline
  $1$ & $0$ & $\frac{1}{\sqrt{2}}\cdot \sqrt{d_K}$ \\
  $2$ & $0$ & $\frac{1}{\sqrt{3}}\cdot \sqrt{d_K}$ \\
  $2$ & $1$ & $\frac{1}{6}\cdot d_K$ \\
  $3$ & $0$ & $\frac{1}{2}\cdot \sqrt{d_K}$ \\
  $3$ & $1$ & $\min\left(\frac{1}{6\sqrt{3}}\cdot d_K,\frac{1}{2\sqrt[4]{108}}\cdot d_K^{\frac{3}{4}}\right)$ \\
  $4$ & $0$ & $\frac{1}{2}\cdot \sqrt{d_K}$ \\
  $4$ & $1$ & $\min\left(\frac{1}{16}\cdot d_K,\frac{1}{8}\cdot d_K^{\frac{3}{4}},\frac{1}{4\sqrt[3]{4}}\cdot d_K^{\frac{2}{3}}\right)$ \\
  $4$ & $2$ & $\min\left(\frac{1}{512}\cdot d_K^{\frac{3}{2}},\frac{1}{64}\cdot d_K\right)$ \\
  $5$ & $0$ & $\frac{1}{2}\cdot \sqrt{d_K}$ \\
  $5$ & $1$ & $\min\left(\frac{2}{25\sqrt{5}}\cdot d_K,\frac{1}{4\sqrt[4]{20}}\cdot d_K^{\frac{3}{4}},\frac{1}{2\sqrt[6]{3125}}\cdot d_K^{\frac{2}{3}},\frac{1}{2\sqrt[8]{12500}}\cdot d_K^{\frac{5}{8}}\right)$ \\
  $5$ & $2$ & $\min\left(\frac{2}{3125}\cdot d_K^{\frac{3}{2}},\frac{1}{50\sqrt{5}}\cdot d_K,\frac{1}{10\sqrt[3]{100}}\cdot d_K^{\frac{5}{6}}\right)$ \\
  \hline
\end{tabular}
\end{center}
\end{rem}

\vspace{5mm}
\section*{Acknowledgements}
We warmly thank Curtis McMullen for his valuable comments on a
previous version of this paper.

\vspace{5mm}

\bibliographystyle{elsarticle-num}

\end{document}